\documentclass{amsart}
\usepackage{amsmath}
\usepackage{amssymb}
\usepackage{graphicx}
\usepackage{subfigure}
\usepackage[pdftex,bookmarksnumbered,bookmarksopen,colorlinks,linkcolor=red,anchorcolor=black,citecolor=blue,urlcolor=blue]{hyperref}
\usepackage{color}
\usepackage[ruled,vlined, boxed]{algorithm2e}

\newtheorem{theorem}{Theorem}[section]

\newtheorem{lemma}{Lemma}[section]
\newtheorem{remark}{Remark}[section]

   \long\def\comment#1{}

\def\ad#1{\begin{aligned}#1\end{aligned}}  
 \def\an#1{\begin{align}#1\end{align}}
 \def\d{\operatorname{div}}
\def\p#1{\begin{pmatrix}#1\end{pmatrix}} 
  \numberwithin{equation}{section}
\numberwithin{table}{section}
\numberwithin{figure}{section}

\def\boxit#1{\vbox{\hrule height1pt \hbox{\vrule width1pt\kern1pt
     #1\kern1pt\vrule width1pt}\hrule height1pt }}

\newcommand{\disp}{\displaystyle}

\newcommand{\bq}{\begin{equation}}
\newcommand{\eq}{\end{equation}}

\newcommand{\curl}{\operatorname{curl}}
\renewcommand{\div}{\operatorname{div}}

\usepackage{comment,enumerate,multicol,xspace}

  \newcounter{mnote}
  \setcounter{mnote}{0}
  
  \let\oldmarginpar\marginpar
    \renewcommand\marginpar[1]{\-\oldmarginpar[\raggedleft\footnotesize #1]%
    {\raggedright\footnotesize #1}}


\begin{document}

\title[A posteriori error estimate for linear elasticity problems]
{ Residual-Based A Posteriori Error Estimates for Symmetric Conforming Mixed Finite Elements for Linear Elasticity Problems }
\author{Long Chen}%
\address{Department of Mathematics, University of California at Irvine, Irvine, CA 92697, USA}%
\email{chenlong@math.uci.edu}%
\author{Jun Hu}%
\address{LMAM and School of Mathematical Sciences, Peking University, Beijing 100871, China}%
\email{hujun@math.pku.edu.cn}%
\author{Xuehai Huang}%
\address{College of Mathematics and Information Science, Wenzhou University, Wenzhou 325035, China}%
\email{xuehaihuang@gmail.com}%
\author {Hongying Man}%
\address{School of Mathematics and Statistics, Beijing Institute of Technology, Beijing 100081, China}%
\email{manhy@bit.edu.cn}%
\thanks{The first author was supported by  NSF Grant DMS-1418934. This work was finished when L. Chen visited Peking University in the fall of 2015. He would like to thank Peking University for the support and hospitality, as well as for their exciting research atmosphere.}
\thanks{The second author was supported by  the NSFC Projects 11625101, 91430213 and 11421101.}
\thanks{The third author was supported by the NSFC Projects 11301396 and 11671304, Zhejiang Provincial
Natural Science Foundation of China Projects LY17A010010, LY15A010015 and LY15A010016, and Wenzhou Science and Technology Plan Project G20160019.}
\thanks{The last author was supported by  the NSFC Project 11401026. She would like to thank the support of the China Scholarship Council and the university of California, Irvine during her visit to UC Irvine from 2014 to 2015.}

\begin{abstract}
 A posteriori error estimators for the symmetric mixed finite element methods for linear
elasticity problems of Dirichlet and mixed boundary conditions are proposed. Stability and efficiency of the estimators are proved. Finally, we provide numerical examples
to verify the theoretical results.
  \vskip 15pt
\noindent{\bf Keywords.}{
     symmetric mixed finite element, linear elasticity problems, {\it a posteriori} error estimator, adaptive method.
     }
 \vskip 15pt
\noindent{\bf AMS subject classifications.}
    { 65N30, 73C02.}
\end{abstract}
\maketitle

\section{Introduction}

In this paper, we are concerned with the development of residual-based {\it a posteriori} error estimators for the symmetric mixed finite element methods for planar linear elasticity problems. Let $\Omega\subset\mathbb{R}^2$ be a bounded polygonal domain with boundary $\Gamma:=\partial\Omega$, based on the Hellinger-Reissner principle, the linear elasticity
  problem with homogeneous Dirichlet boundary condition within a stress-displacement form reads:
Find $(\sigma,u)\in\Sigma\times V :=H({\rm div},\Omega;\mathbb
{S})
        \times L^2(\Omega;\mathbb{R}^2)$, such that
\begin{equation}\label{eqn1}
\left\{ \ad{
  (A\sigma,\tau)+({\rm div}\tau,u)&= 0 && \hbox{for all \ } \tau\in\Sigma,\\
   ({\rm div}\sigma,v)&= (f,v) &\qquad& \hbox{for all \ } v\in V, }
   \right.
\end{equation}
where $\mathbb S\subset\mathbb R^{2\times2}$ is the space of symmetric matrices, and the symmetric tensor space for stress and the
   space for vector displacement are, respectively,
  \an{
  H({\rm div},\Omega;\mathbb {S})
    &:= \Big\{ \p{\tau_{ij} }_{2 \times 2} \in H(\d, \Omega)
    \ \Big| \ \tau_{12} = \tau_{21} \Big\}, \\
     L^2(\Omega;\mathbb{R}^2) &:=
     \Big\{ \p{u_1, u_2}^T
          \ \Big| \ u_1,u_2 \in L^2(\Omega) \Big\}  .}
Throughout the paper, the compliance tensor
$A:\mathbb{S}~\rightarrow~\mathbb{S}$, characterizing the
properties of the material, is bounded and symmetric positive
definite. In the homogeneous isotropic case, the compliance tensor is given by $A\tau=(\tau-\lambda/(2\mu+2\lambda){\rm tr}\tau~ {\rm I})/(2\mu)$, where $\mu>0, \lambda\geq 0$ are the Lam\'{e} constants, ${\rm I}$ is the identity matrix, ${\rm tr}\tau=\tau_{11}+\tau_{22}$ is the trace of the matrix $\tau$. For simplicity, we assume $A$ is a constant matrix in this paper and comment on the generalization to the piecewise constant matrix case.


Because of the symmetry constraint on the stress tensor, it is extremely difficult to construct
stable conforming finite elements of (1.1) even for 2D problems, as stated in the plenary presentation
to the 2002 International Congress of Mathematicians by Arnold~\cite{Arnold2002}. An important progress
 in this direction is the work of Arnold and Winther~\cite{ArnoldWinther2002} and Arnold, Awanou, and Winther~\cite{ArnoldAwanouWinther2008}. In particular, a sufficient condition of the discrete stable method is proposed in these two papers,
which states that a discrete exact sequence guarantees the stability of the mixed method. Based
on such a condition, conforming mixed finite elements on the simplical and rectangular meshes are developed for both 2D and 3D~\cite{AdamsCockburn2005, ArnoldAwanou2005, ArnoldWinther2003a, ChenWang2011, HuShi2007a}. Recently, based on a crucial structure of symmetric matrix valued
piecewise polynomial $H(\div)$ space and two basic algebraic results, Hu and Zhang developed a
new framework to design and analyze the mixed finite element of elasticity problems. As a result, on both
simplicial and tensor product grids, several families of both symmetric and optimal mixed
elements with polynomial shape functions in any space dimension are constructed, see more details in~\cite{Hu2015a, Hu2015, HuZhang2014, HuZhang2015, HuZhang2016}. Theoretical and numerical analysis show that symmetric mixed finite element method is a  popular choice for a robust stress approximation~\cite{CarstensenEigelGedicke2011, CarstensenGuntherReininghausThiele2008}.

Computation with adaptive grid refinement has proved to be a useful and efficient tool
in scientific computing over the last several decades. When the domain contains a re-entering corner, the stress has a singularity at that corner,
non-uniform mesh is necessary to catch the singularity. Adaptive finite element methods based on local mesh refinement can recovery the optimal rate
of convergence. The key behind this technique is to
design a good {\it a posteriori} error estimator that provides a guidance on how and where grids
should be refined. The residual-based {\it a posteriori} error estimators provide indicators for refining and coarsening the mesh and allow to control whether the error is below a given
threshold. Various error estimators for mixed finite element discretizations of the Poisson equation have been obtained
in~\cite{Alonso1996,Carstensen1997,ChenHolstXu2009,GaticaMaischak2005,HoppeWohlmuth1997,LarsonMaalqvist2008,LovadinaStenberg2006}. Extension to the mixed finite element for
linear elasticity is, however, very limited. In~\cite{CarstensenDolzmann1998, Kim2012c, LonsingVerfurth2004}, the authors gave the {\it a posteriori} error estimators for the nonsymmetric mixed finite elements only.

The symmetry of the stress tensor brings essential difficulty to the {\it a posteriori} error analysis. Since only the symmetric part is approximated and not the full gradient, the approach of {\it a posteriori} error analysis developed in~\cite{CarstensenDolzmann1998,CarstensenHu2007,Kim2012c, LonsingVerfurth2004} cannot be applied directly. In order to overcome this difficulty, Carstensen and Gedicke propose to generalize the framework of the {\it a posteriori} analysis for nonsymmetric mixed finite elements to the case of symmetric elements by decomposing the stress into the gradient and the asymmetric part of the gradient. A robust residual-based {\it a posteriori} error estimator for Arnold-Winther's symmetric element was proposed in \cite{CarstensenGedicke2016}, but an arbitrary asymmetric approximation $\gamma_h$ of the asymmetric part of the gradient skew$(Du)=(Du-D^Tu)/2$ was involved in this estimator. Furthermore $\gamma_h$ was chosen as the asymmetric gradient of a post-processed displacement to ensure the efficiency of the estimator. More details can be found below.

The goal of this paper is to present an {\it a posteriori} error estimator together with a theoretical upper and lower bounds, for the conforming
and symmetric mixed finite element solutions developed in~\cite{ArnoldWinther2002,HuZhang2014}. We shall follow the guide  principle in \cite{ArnoldWinther2002}: use the continuous and discrete linear elasticity complex, c.f. \eqref{deRham} and \eqref{deRhamdiscrete}.

Given an approximation $\sigma_h$ on the triangulation $\mathcal T_h$ consisting of triangles,
we construct the following {\it a posteriori} error estimator, denoted by $\eta$,
 $$
 \eta^2(\sigma_{h}, {\mathcal{T}_h}):=\sum\limits_{K\in\mathcal{T}_h}\eta_K^2(\sigma_h)+\sum\limits_{e\in\mathcal{E}_h}\eta_e^2(\sigma_h)
  $$
where
$$
\eta_K^2(\sigma_h) :=h_K^4\|{\rm curl \, curl \,}(A\sigma_h)\|_{0,K}^2,~~~
\eta_e^2(\sigma_h):=h_e\|\mathcal{J}_{e,1}\|_{0,e}^2+h_e^3\|\mathcal{J}_{e,2}\|_{0,e}^2,
$$
$$
\begin{array}{ll}
 \disp \mathcal{J}_{e,1}:=&\left\{\begin{array}{lr}
 \disp\big[(A\sigma_h)t_e\cdot t_e\big]_e&\quad\quad\quad\quad\quad\quad\quad\ \  {\rm if~} e\in\mathcal{E}_h(\Omega),\\
 \disp\big((A\sigma_h)t_e\cdot t_e\big)|_e&~~~~ {\rm if~} e\in\mathcal{E}_h(\Gamma),
 \end{array}\right. \\
 \\
 \disp \mathcal{J}_{e,2}:=&\left\{\begin{array}{lr}
\disp \big[{\rm curl}(A\sigma_h)\cdot t_e\big]_e&~~~~ {\rm if~} e\in\mathcal{E}_h(\Omega),\\
\disp \big({\rm curl}(A\sigma_h)\cdot t_e-\partial _{t_e}\big((A\sigma_h)t_e\cdot \nu_e\big)\big)|_e&~~~~ {\rm if~} e\in\mathcal{E}_h(\Gamma),
 \end{array}\right.
 \end{array}
$$
with $\mathcal E_h$ being the collection of all edges of $\mathcal T_h$. We write $\mathcal E_h=\mathcal E_{h}(\Omega)\bigcup\mathcal {E}_h(\Gamma)$, where $\mathcal E_{h}(\Omega)$ is the collection of interior edges and $\mathcal E_{h}(\Gamma)$ is the collection of  all element edges on the boundary. For any edge $e\in\mathcal{E}_h$, let $t_e=(-n_2,n_1)^T$ be the unit tangential vector along edge $e$ for the unit outward normal $\nu_e=(n_1,n_2)^T$. Let $h_K$ be the diameter of the element $K$ and $h_e$ be the length of edge $e$.  The data oscillation is defined as
$${\rm osc}^2(f,\mathcal{T}_h):=\sum\limits_{K\in\mathcal{T}_h}h_K^2\|f-Q_h f\|_{0,K}^2,$$
where $Q_h$ is the $L^2$ orthogonal projection operator onto the discrete displacement space.

Using the Helmholtz decomposition induced from the linear elasticity complex \cite{ArnoldWinther2002, CarstensenDolzmann1998}, we establish the following reliability
$$
\|\sigma-\sigma_h\|_A\leq C_1(\eta(\sigma_{h}, {\mathcal{T}_h})+{\rm osc}(f,\mathcal{T}_h)).
$$
In addition, we will prove the following efficiency estimate
$$C_2\eta(\sigma_{h}, {\mathcal{T}_h})\leq \|\sigma-\sigma_h\|_A$$
by following the approach from ~\cite{Alonso1996}.

We also generalize the above results to the mixed boundary problems, for which the error estimator is modified on the Dirichlet boundary edges. Reliability and efficiency of the modified error estimator can be proved similarly.

In \cite{ChenHuHuang2016}, a superconvergent approximate displacement $u_h^*$ was constructed by a postprocessing of $(\sigma_h,u_h)$ . Using this result and the {\it a posteriori} error estimation of the stress, we also give the {\it a posteriori} error estimation for the displacement $\|u-u_h^*\|_{1,h}$ in a mesh dependent norm.
\medskip

In order to compare with the {\it a posteriori} error estimator in \cite{CarstensenGedicke2016}, we present their estimator as follows:
\begin{align*}
\tilde{\eta}^2(\sigma_h,\mathcal{T}_h)&:={\rm osc}^2(f,\mathcal{T}_h)+{\rm osc}^2(g,\mathcal{E}_h(\Gamma_N))\\
&+\sum\limits_{K\in\mathcal{T}}h_K^2\|{\rm curl}(A\sigma_h+\gamma_h)\|_{0,K}^2\\
&+\sum\limits_{e\in\mathcal{E}_h(\Omega)}h_e\|[A\sigma_h+\gamma_h]_e\tau_e\|_{0,e}^2\\
&+\sum\limits_{e\in\mathcal{E}_h(\Gamma_D)}h_e\|(A\sigma_h+\gamma_h-\nabla u_D)\tau_e\|_{0,e}^2.
\end{align*}
(The estimator is rewritten in our notation and the details of the standard notation can be found below.) To ensure the efficiency of the estimator, a sufficiently accurate polynomial asymmetric approximation $\gamma_h$ of the asymmetric gradient ${\rm skew}(Du):=(Du-D^Tu)/2$ is involved in the above estimator.
Since the global approximation or even minimization may be too costly, Carstensen and Gedicke compute the sufficiently accurate approximation $\gamma_h={\rm skew}(Du_h^*)$ by the post-processed displacement $u_h^*$ in the spirit of Stenberg \cite{Stenberg1988}. As we can see, this estimator is totally different to ours. The estimators we propose use the symmetric stress directly and do not need any estimation of the asymmetric part. Therefore it is more computationally efficient.

The remaining parts of the paper is organized as follows. Section 2 presents the notations and the discrete finite element problems. Section 3 proposes an {\it a posteriori} error estimator for the stress and proves the reliability and efficiency of the estimator. Section 4 generalizes the results of section 3 to mixed boundary problems.  Section 5 gives {\it a posteriori} error estimation for the displacement. Section 6 presents numerical experiments to show the effectiveness of the estimator.
Throughout this paper, we use ``$\lesssim\cdots $" to mean that ``$\leq C\cdots$", where $C$ is a generic positive constant independent of $h$ and the Lam\'{e} constant $\lambda$, which may take different values at different appearances.

\section{Notations and preliminaries}

Standard notations on Sobolev spaces and norms are adopted throughout this paper and, for brevity, $\|\cdot\|:=\|\cdot\|_{L^2(\Omega)}$ denotes the $L^2$ norm. $(\cdot,\cdot)_K$ represents, as usual, the $L^2$ inner product on the domain $K$, the subscript $K$ is omitted when $K=\Omega$. $\langle\cdot,\cdot\rangle_{\Gamma}$ represents the $L^2$ inner product on the boundary $\Gamma$. For brevity, let $\disp\partial_{x_i}:=\partial/\partial x_i$ and $\partial^2_{x_ix_j}:=\partial^2/\partial x_i\partial x_j, j=1,2,$  $\partial_{\nu}:=\partial/\partial \nu$, $\partial_{t}:=\partial/\partial t$. For $\phi\in H^1(\Omega;\mathbb{R})$,  $v=(v_1,v_2)^{T}\in H^1(\Omega;\mathbb{R}^2)$, set
$$
{\rm \bf{Curl}}\phi:=\left(-\partial \phi/ \partial x_2,\ \partial \phi/ \partial x_1\right),\quad\quad\quad
{\rm \bf{Curl}}v:=\left(\begin{array}{cc}
-\partial v_1/ \partial x_2&\partial v_1/ \partial x_1\\
-\partial v_2/ \partial x_2&\partial v_2/ \partial x_1
\end{array}\right).$$
For $\tau=(\tau_{i,j})_{2\times2}\in H^1(\Omega; \mathbb{R}^{2\times 2})$, set
$$
{\rm curl}\tau:=\left(\begin{array}{c}
\partial\tau_{12}/ \partial x_1-\partial\tau_{11}/ \partial x_2\\
\partial\tau_{22}/ \partial x_1-\partial\tau_{21}/ \partial x_2
\end{array}\right),\qquad
{\rm div}\tau:=\left(\begin{array}{c}
\partial\tau_{11}/ \partial x_1+\partial\tau_{12}/ \partial x_2\\
\partial\tau_{21}/ \partial x_1+\partial\tau_{22}/ \partial x_2
\end{array}\right).
$$
Namely the differential operators $\curl$ and $\div$ are applied rowwise for tensors.

Let $\mathcal{T}_h$ be a shape-regular triangulation of $\bar{\Omega}$ into triangles with the set of edges $\mathcal{E}_h$. Denote by $\mathcal{E}_h(\Omega)$ the collection of all interior element edges in $\mathcal T_h$ and $\mathcal E_h(\Gamma)$ the collection of all element edges on the boundary. For any triangle $K\in\mathcal{T}_h$, let $\mathcal{E}(K)$ be the set of its edges. For any edge $e\in\mathcal{E}(K)$, let $t_e=(-n_2,n_1)^T$ be the unit tangential vector along edge $e$ for the unit outward normal vector $\nu_e=(n_1,n_2)^T$, $h_K$ be the diameter of the element $K$ and  $h_e$ be the length of the edge $e$, $h=\max\limits_{K\in\mathcal{T}_h}\{h_K\}$ be the diameter of the partition $\mathcal{T}_h$. The jump $[w]_e$ of $w$ across edge $e=\bar{K}_+\cap\bar{K}_-$ reads
$$[w]_e:=(w|_{K_+})_e-(w|_{K_-})_e.$$
Particularly, if $e\in \mathcal{E}_h(\Gamma),\ [w]_e:=w|_e$.

Let $\Sigma_h\times V_h\subseteq \Sigma \times V $ be a symmetric conforming mixed element defined on the mesh $\mathcal{T}_h$, then the discrete mixed formulation for the problem (1.1) is: find $(\sigma_h, u_h)\in \Sigma_h\times V_h$, such that
\begin{equation}\label{mfem}
\left\{
\begin{array}{rcl}(A\sigma_h,\tau_h)+({\rm div}\tau_h, u_h)&& =0\quad\quad\quad\quad\;{\rm for ~all~}\tau_h\in \Sigma_h,\\
({\rm div}\sigma_h,v_h)&& =(f,v_h)\quad\quad{\rm for~all~}v_h\in V_h.
\end{array}
\right.
\end{equation}

In the sequel, we briefly introduce Hu-Zhang element~
\cite{Hu2015a, HuZhang2014, HuZhang2016}.
For each $K\in\mathcal{T}_h$, let $P_k(K)$ be the space of polynomials of total degree at most $k$ on $K$ and
$$P_{k}(K; \mathbb{S}):=\{\tau\in L^2(K;\mathbb{R}^{2\times 2})|\tau_{i,j}\in P_k(K), \tau_{ij}=\tau_{ji}, 1\leq i\leq 2,1\leq j\leq 2\}, $$
$$P_{k}(K; \mathbb{R}^2):=\{v\in L^2(K;\mathbb{R}^2)|v_i\in P_k(K), 1\leq i\leq 2\}, $$
define an $H(\div, K; \mathbb{S})$ bubble function  as
\[
B_{K,k}:=\left\{\tau\in P_{k}(K; \mathbb{S}): \tau\nu|_{\partial K}=0\right\}.
\]
The Hu-Zhang element space
is given by
\begin{align*}
\Sigma_{h}&:=\widetilde{\Sigma}_{k,h} + B_{k,h},\\
V_{h}&:=\left\{v\in L^2(\Omega; \mathbb{R}^2): v|_K\in P_{k-1}(K; \mathbb{R}^2)\quad \forall\,K\in\mathcal
{T}_h\right\},
\end{align*}
with integer $ k\geq 3$, where
\begin{align*}
B_{k,h}&:=\left\{\tau\in H(\div, \Omega; \mathbb{S}):
\tau|_K\in B_{K,k} \quad \forall\,K\in\mathcal{T}_h \right\}, \\
\widetilde{\Sigma}_{k,h}&:=\left\{\tau\in H^1(\Omega; \mathbb{S}):
\tau|_K\in P_{k}(K; \mathbb{S}) \quad \forall\,K\in\mathcal{T}_h \right\}.
\end{align*}


For the above elements, the following {\it a priori } error estimate holds.
\begin{theorem}[A priori error estimate \cite{Hu2015a, HuZhang2014, HuZhang2016}]
The exact solution $(\sigma,u)$ of problem (1.1) and the approximate solution $(\sigma_h, u_h)$ of problem (2.1) satisfy
\begin{align*}
\|\sigma-\sigma_h\|_0&\lesssim h^m\|\sigma\|_m, \quad\quad\;\,\text{for}~1\leq m\leq k+1,\\
\|{\rm div}(\sigma-\sigma_h)\|_0&\lesssim h^m\|{\rm div}\sigma\|_m,\quad\text{for}~0\leq m\leq k,\\
\|u-u_h\|_0&\lesssim h^m\|u\|_{m+1}, \quad\;\,\text{for}~1\leq m\leq k.
\end{align*}
\end{theorem}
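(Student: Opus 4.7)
The plan is to apply Brezzi's framework for mixed finite elements to the discrete problem~\eqref{mfem}. Two structural properties must be verified for the Hu--Zhang pair $\Sigma_h \times V_h$: coercivity of $(A\cdot,\cdot)$ on the discrete divergence-free subspace, and a uniform discrete inf-sup condition for $\div : \Sigma_h \to V_h$. The decisive design property is the inclusion $\div \Sigma_h \subseteq V_h$, which forces the discrete kernel to coincide with $\{\tau_h \in \Sigma_h : \div \tau_h = 0\}$; boundedness and positive-definiteness of $A$ then immediately yield $(A\tau_h,\tau_h) \gtrsim \|\tau_h\|_{H(\div)}^2$ on this kernel.

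The inf-sup condition is the main obstacle and the heart of the Hu--Zhang construction. I would establish surjectivity of $\div : \Sigma_h \to V_h$ with a bounded right inverse in two stages: first lift an arbitrary $v_h \in V_h$ to some $\tau \in H^1(\Omega;\mathbb S)$ with $\div \tau = v_h$ and $\|\tau\|_1 \lesssim \|v_h\|_0$ via the continuous surjectivity of $\div$; then take a quasi-interpolant $\tau_1 \in \widetilde{\Sigma}_{k,h}$ and correct the local divergence error by an element-wise bubble $\tau_2 \in B_{k,h}$ so that $\div(\tau_1+\tau_2) = v_h$. The nontrivial algebraic input, supplied in the cited Hu--Zhang papers, is that $\div(B_{K,k})$ exhausts the complement in $P_{k-1}(K;\mathbb R^2)$ of what is already reachable from the continuous part, making the local correction always possible. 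A Fortin-type argument then produces a canonical interpolant $\Pi_h : H^1(\Omega;\mathbb S) \to \Sigma_h$ satisfying the commuting relation $\div \Pi_h = Q_h \div$, where $Q_h$ is the $L^2$ projection onto $V_h$, together with the standard piecewise polynomial approximation bounds.

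With stability secured, the three estimates follow from the commuting diagram plus elementary approximation. Subtracting the discrete equation from the continuous one immediately gives $\div \sigma_h = Q_h f$, hence $\div(\sigma - \sigma_h) = (I-Q_h)\div\sigma$, which yields the divergence bound directly from $V_h \supset P_{k-1}$ for $0 \le m \le k$. For the stress, I would test the error equation with $\tau_h := \Pi_h\sigma - \sigma_h$; since $\div\tau_h = Q_h\div\sigma - Q_h f = 0$, coercivity on the kernel gives $\|\Pi_h\sigma - \sigma_h\|_0 \lesssim \|\sigma - \Pi_h\sigma\|_0$, and the triangle inequality combined with $\|\sigma - \Pi_h\sigma\|_0 \lesssim h^m\|\sigma\|_m$ for $1 \le m \le k+1$ (because $\Sigma_h$ contains $P_k$) completes the first bound. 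Finally, the discrete inf-sup supplies $\tau_h \in \Sigma_h$ with $\div\tau_h = Q_h u - u_h$ and $\|\tau_h\|_{H(\div)} \lesssim \|Q_h u - u_h\|_0$; inserting into the first error equation and using the $L^2$-orthogonality of $Q_h$ yields $\|Q_h u - u_h\|_0 \lesssim \|\sigma-\sigma_h\|_0$, and combining with $\|u-Q_h u\|_0 \lesssim h^m\|u\|_m$ produces the stated rate $h^m\|u\|_{m+1}$ for $1 \le m \le k$.
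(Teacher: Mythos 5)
This theorem is not proved in the paper at all: it is quoted verbatim from the Hu--Zhang references \cite{Hu2015a, HuZhang2014, HuZhang2016}, so there is no in-paper argument to compare against. Your sketch is the standard Brezzi/Fortin route that those references follow --- discrete kernel coercivity via $\div\Sigma_h\subseteq V_h$, inf-sup via a continuous lifting plus quasi-interpolation plus bubble correction, a commuting Fortin interpolant $\div\Pi_h=Q_h\div$, and then the three estimates by the usual duality/orthogonality manipulations --- and it is correct in outline; deferring the key algebraic fact about $\div B_{K,k}$ to the cited papers is appropriate, since that is genuinely the nontrivial content of the Hu--Zhang construction. One caveat: the paper's convention is that $\lesssim$ hides constants independent of the Lam\'e constant $\lambda$, and your claim that positive-definiteness of $A$ ``immediately'' yields $(A\tau_h,\tau_h)\gtrsim\|\tau_h\|_0^2$ on the kernel is not $\lambda$-uniform --- for the isotropic $A$ the smallest eigenvalue degenerates like $(\mu+\lambda)^{-1}$ on tensors with nonzero trace. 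The robust version requires the estimate $\|\tau\|_0\lesssim\|\tau\|_A+\|\div\tau\|_{-1}$ for tensors with mean-zero trace (Proposition~9.1.1 of \cite{BoffiBrezziFortin2013}, which this paper itself invokes in Section~3), applied on the discrete divergence-free subspace after splitting off the constant-trace mode. With that substitution your argument goes through as the canonical proof.
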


In the continuous case, the following exact sequence
\begin{equation}\label{deRham}
P_1(\Omega)
\longrightarrow
H^2(\Omega)
\stackrel{\rm \bf{Curl} \,  \bf{Curl}}{\longrightarrow}
H(\div,\Omega;\mathbb S)
\stackrel{\div}{\longrightarrow}
L^2(\Omega,\mathbb R^2)
\end{equation}
holds for linear elasticity~\cite{ArnoldWinther2002}. In the discrete case, the exact sequence holds similarly
\begin{equation}\label{deRhamdiscrete}
P_1(\Omega)
\longrightarrow
\Phi_h
\stackrel{\rm \bf{Curl} \,  \bf{Curl}}{\longrightarrow}
\Sigma_h
\stackrel{\div}{\longrightarrow}
V_h.
\end{equation}
As stated in \cite{ArnoldWinther2002}, the space $\Phi_h$ for the Arnold-Winther element is precisely the space of $C^1$
piecewise polynomials which are $C^2$ at the vertices, that is, the well-known high-order Hermite
 or Argyris finite element. The Hu-Zhang element is an enrichment of the Arnold-Winther element, adding all the piecewise polynomial  matrices of degree $k$ which are not divergence-free on each element and belong to $ H(\div,\Omega;\mathbb S)$ globally. So the space $\Phi_h$ for the Hu-Zhang element is the same as the one for the Arnold-Winther element.

\begin{lemma}[Helmholtz-type decomposition \cite{ArnoldWinther2002, CarstensenDolzmann1998}]
For any $\tau \in L^2(\Omega; \mathbb S)$, there exists $v\in H_0^1(\Omega;\mathbb{R}^2)$ and $\phi \in H^2(\Omega)/P_1(\Omega)$, such that
\begin{equation}\label{symdec}
\tau = \mathcal C\varepsilon (v) + {\rm \bf{Curl}\, \bf{Curl}} \phi,
\end{equation}
and the decomposition is orthogonal in the weighted $L^2$-inner product $(\mathcal C^{-1} \cdot,\cdot): = (A~ \cdot,\cdot)$, i.e.,
\begin{equation}
\|\tau\|_A^2 = \|\varepsilon (v)\|_{A^{-1}}^2 + \| {\rm \bf{Curl} \, \bf{Curl}} \phi \|_A^2,
\end{equation}
where $P_1(\Omega)$ is the linear polynomial space on $\Omega$, the norm $\|\cdot\|_A=(A~\cdot,\cdot)$. \end{lemma}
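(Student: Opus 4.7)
The plan is to construct the two pieces of the decomposition sequentially: first extract the symmetric-gradient part $\mathcal C\varepsilon(v)$ as the $A$-orthogonal projection of $\tau$ onto the image of $H_0^1(\Omega;\mathbb{R}^2)$ under $\mathcal C\varepsilon(\cdot)$ by solving a standard linear elasticity problem, and then identify the remainder as an image of $\mathbf{Curl}\,\mathbf{Curl}$ via the exact sequence \eqref{deRham}. The orthogonality in the $A$-inner product will be built into the variational characterization of $v$ and confirmed by a short integration by parts.

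First I would define $v\in H_0^1(\Omega;\mathbb{R}^2)$ as the unique weak solution of
$$
(\mathcal C\varepsilon(v),\varepsilon(w))=(\tau,\varepsilon(w))\qquad\text{for all } w\in H_0^1(\Omega;\mathbb{R}^2),
$$
which, using $A=\mathcal C^{-1}$, is equivalent to the $A$-orthogonal projection equation $(A(\tau-\mathcal C\varepsilon(v)),\mathcal C\varepsilon(w))=0$. Existence and uniqueness follow from Korn's inequality together with the uniform positive-definiteness of $\mathcal C$ via Lax--Milgram. Setting $\rho:=\tau-\mathcal C\varepsilon(v)$, the variational equation yields $(\rho,\varepsilon(w))=0$ for every $w\in H_0^1(\Omega;\mathbb{R}^2)$; since $\rho$ is symmetric, $\rho:\varepsilon(w)=\rho:\nabla w$, so this is exactly the statement that $\operatorname{div}\rho=0$ in the distributional sense on $\Omega$.

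Next, since $\rho\in L^2(\Omega;\mathbb S)\subset H(\operatorname{div},\Omega;\mathbb S)$ is divergence-free, I would invoke exactness of the elasticity complex \eqref{deRham} at the middle position to produce $\phi\in H^2(\Omega)/P_1(\Omega)$ with $\rho=\mathbf{Curl}\,\mathbf{Curl}\,\phi$, which combined with the definition of $\rho$ delivers the decomposition $\tau=\mathcal C\varepsilon(v)+\mathbf{Curl}\,\mathbf{Curl}\,\phi$. The orthogonality identity then follows from $A\mathcal C=I$ and integration by parts: since $\mathbf{Curl}\,\mathbf{Curl}\,\phi$ is symmetric,
$$
(A\mathcal C\varepsilon(v),\mathbf{Curl}\,\mathbf{Curl}\,\phi)=(\nabla v,\mathbf{Curl}\,\mathbf{Curl}\,\phi)=-(v,\operatorname{div}(\mathbf{Curl}\,\mathbf{Curl}\,\phi))=0,
$$
where the boundary contributions vanish because $v\in H_0^1$ and the last equality uses the cochain property $\operatorname{div}\circ\mathbf{Curl}\,\mathbf{Curl}=0$ coming from \eqref{deRham}. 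Expanding $\|\tau\|_A^2=(A\tau,\tau)$ with $\tau=\mathcal C\varepsilon(v)+\mathbf{Curl}\,\mathbf{Curl}\,\phi$ then yields the claimed Pythagorean identity, and noting $(A\mathcal C\varepsilon(v),\mathcal C\varepsilon(v))=(\varepsilon(v),\mathcal C\varepsilon(v))=\|\varepsilon(v)\|_{A^{-1}}^2$ gives the stated form of the energy norms.

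The main obstacle is the exactness step: I am using that every divergence-free symmetric $L^2$ tensor on $\Omega$ takes the form $\mathbf{Curl}\,\mathbf{Curl}\,\phi$ with $\phi\in H^2(\Omega)/P_1(\Omega)$. This is a Poincaré-type lemma for the elasticity complex which requires $\Omega$ to be simply connected (implicit for the polygonal domains considered here) and relies on results from \cite{ArnoldWinther2002, CarstensenDolzmann1998}; quotienting by $P_1(\Omega)$ is what fixes the kernel of $\mathbf{Curl}\,\mathbf{Curl}$. Granting this, the remaining ingredients -- coercivity of the elasticity bilinear form via Korn's inequality and the integration by parts -- are standard.
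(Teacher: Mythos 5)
Your proof is correct and follows the standard argument from the cited references \cite{ArnoldWinther2002, CarstensenDolzmann1998}: define $v$ by the auxiliary elasticity problem, observe the residual is symmetric and divergence-free, and apply the exactness of the elasticity complex (Airy stress function on a simply connected domain) to write it as ${\rm \bf{Curl}\,\bf{Curl}}\,\phi$, with orthogonality built into the variational characterization of $v$. The paper itself offers no proof of this lemma, quoting it directly from those references, so there is nothing internal to compare against; your reconstruction, including the correct identification of the Poincar\'e-type exactness step as the essential external ingredient, is exactly what those sources do.
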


Since $$
(A^{-1}A\tau,\tau)=(\tau,\tau)=(A(A^{-1}\tau),\tau),
$$
by the boundedness and coerciveness of the operator $A$, we obtain the following relationship of the norms: for any $\tau\in\Sigma$, there exist positive constants $C_1$ and $C_2$, which are independent of the Lam\'e constant $\lambda$, such that
\begin{align}\label{normequivalence}
C_2\|\tau\|_A^2= C_2(A\tau,\tau)\leq\|\tau\|_0^2\leq C_1(A^{-1}\tau,\tau)=C_1\|\tau\|_{A^{-1}}^2.
\end{align}

It is the goal of this paper to present {\it a posterior} error estimate of $\sigma - \sigma_h$ for the Hu-Zhang element method.
It is worth mentioning that the {\it a posterior} error estimator designed in this paper can be easily extended to the Arnold-Winther element~\cite{ArnoldWinther2002}.

\section{A posteriori Error Estimation for Stress}

In this section, we shall prove the reliability and efficiency of the error estimator. The main observation is that: although it is a saddle point problem, the error of stress $\sigma - \sigma _h$ is orthogonal to the divergence-free subspace, while the part of the error that is not divergence- free can be bounded by the data oscillation using the stability of the discretization.

For any $\tau_h\in\Sigma_h$, the error estimator is defined as
 \begin{equation}\label{estimator-1}
 \eta^2(\tau_{h}, {\mathcal{T}_h}):=\sum\limits_{K\in\mathcal{T}_h}\eta_K^2(\tau_h)+\sum\limits_{e\in\mathcal{E}_h}\eta_e^2(\tau_h),
 \end{equation}
where
$$
\eta_K^2(\tau_h) :=h_K^4\|{\rm curl \, curl \,}(A\tau_h)\|_{0,K}^2,~~~
\eta_e^2(\tau_h):=h_e\|\mathcal{J}_{e,1}\|_{0,e}^2+h_e^3\|\mathcal{J}_{e,2}\|_{0,e}^2,
$$
$$
\begin{array}{ll}
 \disp \mathcal{J}_{e,1}:=&\left\{\begin{array}{lr}
 \disp\big[(A\tau_h)t_e \cdot t_e\big]_e&\quad\quad\quad\quad\quad\quad\quad\ \  {\rm if~} e\in\mathcal{E}_h(\Omega),\\
 \disp\big((A\tau_h)t_e\cdot t_e\big)|_e&~~~~ {\rm if~} e\in\mathcal{E}_h(\Gamma),
 \end{array}\right. \\
 \\
 \disp \mathcal{J}_{e,2}:=&\left\{\begin{array}{lr}
\disp \big[{\rm curl}(A\tau_h)\cdot t_e\big]_e&~~~~ {\rm if~} e\in\mathcal{E}_h(\Omega),\\
\disp \big({\rm curl}(A\tau_h)\cdot t_e-\partial _{t_e}\big((A\sigma_h)t_e\cdot \nu_e\big)\big)|_e&~~~~ {\rm if~} e\in\mathcal{E}_h(\Gamma).
 \end{array}\right.
 \end{array}
$$
The data oscillation is defined as
$${\rm osc}^2(f,\mathcal{T}_h):=\sum\limits_{K\in\mathcal{T}_h}h_K^2\|f-Q_h f\|_{0,K}^2,$$
where $Q_h$ is the $L^2$ orthogonal projection operator onto the discrete displacement space $V_h$.
\subsection{Stability result}
For the easy of exposition, we write the mixed formulation for linear elasticity as $\mathcal L(\sigma, u) = f$. The natural stability of the operator is $\|\sigma\|_{H(\div)}+\|u\|\lesssim \|f\|.$ However, a stronger stability can be proved for a special perturbation of the data.

\begin{lemma}
Let $f_h$ be the $L^2$ projection of $f$ onto $V_h$ and let $(\sigma, u) = \mathcal L^{-1}f$ and $(\tilde \sigma, \tilde u)= \mathcal L^{-1}f_h$.
Then we have
\begin{equation}\label{oscf}
\|\sigma - \tilde \sigma\|_A \lesssim {\rm osc}(f,\mathcal T_h).
\end{equation}
\end{lemma}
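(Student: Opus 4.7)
The natural starting point is to subtract the two mixed systems and get the error equations
\begin{align*}
(A(\sigma-\tilde\sigma),\tau)+(\operatorname{div}\tau,u-\tilde u)&=0\quad\text{for all }\tau\in\Sigma,\\
(\operatorname{div}(\sigma-\tilde\sigma),v)&=(f-f_h,v)\quad\text{for all }v\in V.
\end{align*}
The only property of $f_h=Q_hf$ I plan to use is the $L^2$-orthogonality $(f-f_h,w_h)=0$ for every $w_h\in V_h$.

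I would first apply the Helmholtz-type decomposition of Lemma 2.1 to $\sigma-\tilde\sigma\in L^2(\Omega;\mathbb S)$, writing
\[\sigma-\tilde\sigma=\mathcal C\varepsilon(v_0)+\mathbf{Curl}\,\mathbf{Curl}\,\phi,\]
with $v_0\in H^1_0(\Omega;\mathbb R^2)$, $\phi\in H^2(\Omega)/P_1(\Omega)$, and with the two summands $A$-orthogonal. Since $\operatorname{div}(\mathbf{Curl}\,\mathbf{Curl}\,\phi)=0$, testing the first error equation with $\tau=\mathbf{Curl}\,\mathbf{Curl}\,\phi$ kills the divergence term and gives $(A(\sigma-\tilde\sigma),\mathbf{Curl}\,\mathbf{Curl}\,\phi)=0$. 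Combined with the $A$-orthogonal Helmholtz splitting this forces $\|\mathbf{Curl}\,\mathbf{Curl}\,\phi\|_A=0$, hence $\sigma-\tilde\sigma=\mathcal C\varepsilon(v_0)=A^{-1}\varepsilon(v_0)$, or equivalently $A(\sigma-\tilde\sigma)=\varepsilon(v_0)$. This rigidity — the fact that the error is a \emph{pure} symmetric-gradient component of an $H^1_0$ field — is the real payoff of the Helmholtz step.

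Next I would test $\sigma-\tilde\sigma$ against itself in the $A$ inner product, using $A(\sigma-\tilde\sigma)=\varepsilon(v_0)$ and the vanishing trace of $v_0$ to integrate by parts:
\[\|\sigma-\tilde\sigma\|_A^2=(\varepsilon(v_0),\sigma-\tilde\sigma)=-(v_0,\operatorname{div}(\sigma-\tilde\sigma))=-(v_0,f-f_h).\]
Subtracting $(Q_hv_0,f-f_h)=0$, the right-hand side becomes $-(v_0-Q_hv_0,f-f_h)$. Elementwise Cauchy--Schwarz combined with the standard $L^2$-projection estimate $\|v_0-Q_hv_0\|_{0,K}\lesssim h_K|v_0|_{1,K}$ (valid since $V_h$ contains the piecewise constants) controls this by $|v_0|_1\,\operatorname{osc}(f,\mathcal T_h)$.

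To close I would use Korn's inequality on $H^1_0$ together with the norm equivalence \eqref{normequivalence} to write $|v_0|_1\lesssim\|\varepsilon(v_0)\|_0\lesssim\|\varepsilon(v_0)\|_{A^{-1}}=\|\sigma-\tilde\sigma\|_A$, then divide by $\|\sigma-\tilde\sigma\|_A$. The main obstacle is the Helmholtz step just described: without it one only sees the generic stability bound $\|\sigma-\tilde\sigma\|_A\lesssim\|f-f_h\|_0$ and cannot harvest the extra $h_K$ factor that turns $\|f-f_h\|_0$ into the oscillation. A minor but real point to watch throughout is that every inequality (Korn, the norm equivalence, the boundedness of $A$ and $A^{-1}$) be applied in a way that keeps the hidden constants independent of the Lamé parameter $\lambda$, as is insisted upon in the paper's framework.
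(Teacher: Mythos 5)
Your proof is correct; the one genuine structural difference from the paper is the opening Helmholtz step, which the paper does not use in this lemma. The paper instead tests the first error equation with $\tau=\sigma-\tilde\sigma$ to get $\|\sigma-\tilde\sigma\|_A^2=-(\operatorname{div}(\sigma-\tilde\sigma),u-\tilde u)=-(f-Q_hf,u-\tilde u)$ and then uses that the exact and perturbed displacements satisfy $\varepsilon(u-\tilde u)=A(\sigma-\tilde\sigma)$ with $u-\tilde u\in H_0^1(\Omega;\mathbb R^2)$; in other words $v=u-\tilde u$ plays exactly the role of your $v_0$ (indeed $v_0=u-\tilde u$ up to a rigid motion, which vanishes since both lie in $H_0^1$). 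From that point on the two arguments coincide step for step: subtract $Q_hv$ using the $L^2$-orthogonality of $f-Q_hf$ to $V_h$, apply elementwise Cauchy--Schwarz with $\|v-Q_hv\|_{0,K}\lesssim h_K|v|_{1,K}$, and close with Korn's inequality and the $\lambda$-uniform norm equivalence \eqref{normequivalence}. What your detour buys is that the $H_0^1$ potential is manufactured by Lemma 2.1 and the rigidity argument ($\|\mathbf{Curl}\,\mathbf{Curl}\,\phi\|_A=0$), so you never need to assert the regularity $u-\tilde u\in H_0^1$, which the paper's one-line claim $\varepsilon(v)=A(\sigma-\tilde\sigma)$ implicitly relies on; the cost is invoking an extra lemma for a fact the error system already hands you directly. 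Both versions correctly keep all hidden constants independent of the Lam\'e constant $\lambda$.
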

\begin{proof}
Use the first equation of \eqref{eqn1} and let $v = u - \tilde u$
\begin{align*}
(A(\sigma - \tilde\sigma), \sigma - \tilde\sigma)&=-({\rm div}(\sigma-\tilde\sigma), u - \tilde u)=- (f- Q_h f, u - \tilde u)\\
&=(f-Q_h f, Q_h v - v)\\
&\leq \sum\limits_{K\in\mathcal{T}_h}\|f-Q_h f\|_{0,K}\|v-Q_hv\|_{0,K}\\
&\lesssim\sum\limits_{K\in\mathcal{T}_h}\|f-Q_h f\|_{0,K}h_K |v |_{1,K}\\
&\lesssim\left (\sum\limits_{K\in\mathcal{T}_h}h_K^2\|f-Q_h f\|_{0,K}^2\right )^{\frac{1}{2}}\|\varepsilon(v)\|_0,
\end{align*}
where the Korn's inequality is used and the symmetric gradient $\varepsilon(v)=\frac{1}{2}(\nabla v+(\nabla v)^T)$. Since $\varepsilon(v)=A(\sigma-\tilde{\sigma})$, by \eqref{normequivalence}, $\|\varepsilon(v)\|_0 \lesssim \|\sigma - \tilde \sigma\|_A$. We acquire the desirable stability result.
\end{proof}

The oscillation ${\rm osc}(f,\mathcal T_h)$ is an upper bound of $\|f-f_h\|_{-1}$ and is of high order comparing with the error estimator.

\subsection{Orthogonality}
For any $\phi\in H^2(\Omega)$, ${\rm \bf{Curl} \, \bf{Curl} \, }\phi\in H({\rm div},\Omega;\mathbb{S})$, we can use the exact sequence property $\div {\rm \bf{Curl} \, \bf{Curl} \, } = 0$ to get
\begin{equation}\label{Asigma}
(A\tilde \sigma, {\rm \bf{Curl} \, \bf{Curl} \, } \phi) =- (\tilde u, \div {\rm \bf{Curl} \, \bf{Curl} \,}\phi ) = 0.
\end{equation}
Similarly $$(A \sigma_h, {\rm\bf{Curl} \, \bf{Curl} \, } \phi_h) = -(u_h, \div {\rm \bf{Curl} \, \bf{Curl} \,}\phi_h ) = 0$$
for any $\phi_h\in \Phi_h$.
Therefore we have a partial orthogonality
\begin{equation}\label{partial-orthog}
(A(\tilde \sigma - \sigma _h), {\rm \bf{Curl} \, \bf{Curl} \,} \phi_h ) = 0 \quad \forall~\phi_h \in \Phi_h.
\end{equation}

\subsection{Upper bound}
Let $S_h^5$ denote the Argyris finite element space, which consists of $C^1$ piecewise polynomials of degree less than or equal to $5$
\begin{align*}
S_h^5:=\left\{v\in L^2(\bar{\Omega}):~v|_K\in P_5(K),~\forall K\in\mathcal{T}_h, ~v \text{ and its all first and second}\ \ \right.\\
 \text{   derivatives are continuous on the vertices,} ~v ~\text{is continuous}\ \ \\
\left.\text{  along the normal direction on the edge midpoints}\right\}.
 \end{align*}
Following \cite{ShiWang2013, GiraultScott2002}, we can define a quasi-interpolation operator $I_h:~H^2(\Omega)~\rightarrow S_h^5$, which preserves the values of the function on all vertices of $\mathcal{T}_h$. On each element $K\in\mathcal{T}_h$, for any $v\in H^2(\Omega)$, $I_hv|_K\in P_5(K)$ and it satisfies
\begin{itemize}
\item $I_hv|_K(a_{i,K})=v(a_{i,K}),\quad\quad 1\leq i\leq 3;$\\
\item $\partial_{x_j}(I_hv|_K)(a_{i,K})=\frac{1}{\mathcal{N}_h(a_{i,K})}\sum\limits_{K'\in S(a_{i,K})}\partial_{x_j}(P_hv|_{K'})(a_{i,K}),\ 1\leq i\leq 3,\\ \ j=1,2;$\\
\item $\partial^2_{x_jx_l}(I_hv|_K)(a_{i,K})=\frac{1}{\mathcal{N}_h(a_{i,K})}\sum\limits_{K'\in S(a_{i,K})}\partial^2_{x_jx_l}(P_hv|_{K'})(a_{i,K}),\ 1\leq i\leq 3,\ 1\leq j\leq l\leq 2;$\\
\item $\partial_{\nu}(I_hv|_K)(a_{3+i,K})=\frac{1}{\mathcal{N}_h(a_{3+i,K})}\sum\limits_{K'\in S(a_{3+i,K})}\partial_{\nu}(P_hv|_{K'})(a_{3+i,K}),\ 1\leq i\leq 3;$
\end{itemize}
where $a_{i,K},1\leq i\leq 3,$ are the vertices of $K$, $a_{3+i,K},1\leq i\leq 3,$ are the edge midpoints of $K$, $\nu$ is the edge outer normal of the element $K$ on the edge midpoint, $S(a_{i,K})=\bigcup\{K\in\mathcal{T}_h: a_{i,K}\in K\}$ and $\mathcal{N}_h(a_{i,K})=\text{card}\{K:K\in S(a_{i,K})\}$, $P_h$ is the projection operator from $L^2(\Omega)$ onto the piecewise linear polynomial finite element space on $\mathcal{T}_h$. It is obvious that the interpolation operator $I_h$ is uniquely determined by the above degrees of freedom. Furthermore, $I_h$ is a projection, i.e.
\begin{align}
I_hv=v\quad\quad \forall v\in S_h^5,
\end{align}
and it preserves the value of the function on vertices for any $v\in H^2(\Omega)$, i.e.
\begin{align}\label{preserve-vertex value}
I_hv(a_{i,K})=v(a_{i,K})\quad\quad \forall K\in\mathcal{T}_h,\ \ 1\leq i\leq 3.
\end{align}
A similar scaling argument as in \cite{ShiWang2013, GiraultScott2002} gives the following interpolation estimates
\begin{equation}\label{inter-error-K}
|v-I_hv|_{m,K} \lesssim h_K^{2-m}|v|_{2,S_K},~~0\leq m\leq 1,\ \ \forall K\in\mathcal{T}_h,
\end{equation}
\begin{equation}\label{inter-error-e}
|v-I_hv|_{m,e} \lesssim h_e^{2-m-\frac{1}{2}}|v|_{2,S_e},~~0\leq m\leq 1, \ \ \forall\ e\in\mathcal{E}_h,
\end{equation}
where $S_K=\bigcup\{K_i\in\mathcal{T}_h: K_i\bigcap\bar{K}\neq\varnothing\}$, $S_e=\bigcup\{K_i\in\mathcal{T}_h:K_i\bigcap e\neq\varnothing \}.$

Applying the Helmholtz decomposition to the error $\tilde \sigma-\sigma_h$, we have
\begin{equation}\label{HD-error}{
\tilde \sigma-\sigma_h=\mathcal{C}\varepsilon(v)+{\rm \bf{Curl} \, \bf{Curl} \,}\phi
}
\end{equation}
and
\begin{equation}\label{HD-error-estimate}
\|{\rm \bf{Curl} \, \bf{Curl} \,}\phi\|_A\leq\|\tilde{\sigma}-\sigma_h\|_A,
\end{equation}
where $v\in H_0^1(\Omega;\mathbb{R}^2)$ and $\phi\in H^2(\Omega)/P_1(\Omega)$.
By this orthogonal decomposition and the fact $\div (\tilde \sigma-\sigma_h) = 0$,
\begin{align*}
\| \tilde \sigma-\sigma_h\| _A^2&= (A( \tilde \sigma-\sigma_h),\mathcal{C}\varepsilon(v)+{\rm \bf{Curl} \, \bf{Curl} \,}\phi)\\
&= - (\div(\tilde \sigma-\sigma_h),v)+(A(\tilde \sigma-\sigma_h),{\rm \bf{Curl} \, \bf{Curl} \,}\phi)\\
&= (A(\tilde \sigma-\sigma_h),{\rm \bf{Curl} \, \bf{Curl} \,}\phi).
\end{align*}
Since ${\rm \bf{Curl} \, \bf{Curl} \,} (I_h\phi)\in \Sigma_h$, by the orthogonality \eqref{partial-orthog} and the equation \eqref{Asigma},
\begin{align*}
(A(\tilde \sigma-\sigma_h),{\rm \bf{Curl} \, \bf{Curl} \,}\phi) &= (A(\tilde \sigma-\sigma_h),{\rm \bf{Curl} \, \bf{Curl} \,}(\phi-I_h\phi))\\
&= -(A\sigma_h,{\rm \bf{Curl} \, \bf{Curl} \,}(\phi-I_h\phi)).
\end{align*}
An integration by parts gives
\begin{align}
& (A\sigma_h,{\rm \bf{Curl} \, \bf{Curl} \,}(\phi-I_h\phi))\nonumber \\
 &= -\sum\limits_{K\in\mathcal{T}_h}({\rm curl}(A\sigma_h),{\rm \bf{Curl}}(\phi-I_h\phi))_K+\sum\limits_{K\in\mathcal{T}_h}\langle(A\sigma_h)t,{\rm \bf{Curl}}(\phi-I_h\phi)\rangle_{\partial K}\nonumber\\
 &=\sum\limits_{K\in\mathcal{T}_h}({\rm curl \, curl \,}(A\sigma_h),\phi-I_h\phi)_K+\sum\limits_{K\in\mathcal{T}_h}\langle(A\sigma_h)t,{\rm \bf{Curl}}(\phi-I_h\phi)\rangle_{\partial K}\label{byparts}\\&-\sum\limits_{K\in\mathcal{T}_h}\langle{\rm curl}(A\sigma_h)\cdot t,\phi-I_h\phi\rangle_{\partial K}\nonumber.
\end{align}
The second term of the right hand side can be rewritten as
\begin{align*}
\sum\limits_{K\in\mathcal{T}_h}\langle A\sigma_h t,{\rm \bf{Curl}}(\phi-I_h\phi)\rangle_{\partial K}
 =&\sum\limits_{K\in\mathcal{T}_h}\langle(A\sigma_h)t\cdot t,{\rm \bf{Curl}}(\phi-I_h\phi)\cdot t\rangle_{\partial K}\\
 +&\sum\limits_{K\in\mathcal{T}_h}\langle(A\sigma_h t)\cdot \nu,{\rm \bf{Curl}}(\phi-I_h\phi)\cdot \nu\rangle_{\partial K}
\end{align*}
Since the compliance tensor
$A$ is symmetric and continuous,  $(A\sigma_h t)\cdot \nu=(A\sigma_h \nu)\cdot t$ and $(A\sigma_h t)\cdot \nu$ is continuous across the interior element edge. This implies
\begin{align*}
\sum\limits_{K\in\mathcal{T}_h}\langle(A\sigma_h t)\cdot \nu,{\rm \bf{Curl}}(\phi-I_h\phi)\cdot \nu\rangle_{\partial K} 
=&-\sum\limits_{e\in\mathcal{E}_h(\Gamma)}\langle (A\sigma_h t_e)\cdot \nu_e,\partial_{t_e}(\phi-I_h\phi)\rangle_{e}\\
=&\sum\limits_{e\in\mathcal{E}_h(\Gamma)}\langle \partial_{t_e}((A\sigma_h t_e)\cdot \nu_e),\phi-I_h\phi\rangle_{e}
\end{align*}
where the fact $(\phi-I_h\phi)$ vanishing at the boundary vertices \eqref{preserve-vertex value} is used. So
\begin{align*}
\sum\limits_{K\in\mathcal{T}_h}\langle A\sigma_h t,{\rm \bf{Curl}}(\phi-I_h\phi)\rangle _{\partial K}&=\sum\limits_{e\in\mathcal{E}_h(\Omega)}\langle \big[(A\sigma_h t_e)\cdot t_e\big]_e,\partial_{\nu_e}(\phi-I_h\phi)\rangle_{e}\\
&+\sum\limits_{e\in\mathcal{E}_h(\Gamma)}\langle (A\sigma_h t_e)\cdot t_e,\partial_{\nu_e}(\phi-I_h\phi)\rangle_{e}\\
&+\sum\limits_{e\in\mathcal{E}_h(\Gamma)}\langle \partial_{t_e}((A\sigma_h t_e)\cdot \nu_e),\phi-I_h\phi\rangle_{e}.
\end{align*}
Substituting it into \eqref{byparts}, we get
\begin{align*}
 (A\sigma_h,{\rm \bf{Curl} \, \bf{Curl} \,}(\phi-I_h\phi))
   =&\sum\limits_{K\in\mathcal{T}_h}({\rm curl \, curl \,}(A\sigma_h),\phi-I_h\phi)_K\\
+&\sum\limits_{e\in\mathcal{E}_h(\Omega)}\langle \big[(A\sigma_h t_e)\cdot t_e\big]_e,\partial_{\nu_e}(\phi-I_h\phi)\rangle_{e} \\
-&\sum\limits_{e\in\mathcal{E}_h(\Omega)}\langle \big[{\rm curl}(A\sigma_h)\cdot t_e\big]_e,\phi-I_h\phi \rangle_{e}\\
+&\sum\limits_{e\in\mathcal{E}_h(\Gamma)}\langle (A\sigma_h t_e)\cdot t_e,\partial_{\nu_e}(\phi-I_h\phi)\rangle_{e} \\
+&\sum\limits_{e\in\mathcal{E}_h(\Gamma)}\langle \partial_{t_e}((A\sigma_h t_e)\cdot \nu_e)-{\rm curl}(A\sigma_h)\cdot t_e,\phi-I_h\phi\rangle_{e}.
\end{align*}
Then applying the Cauchy-Schwarz inequality, the error estimate of the quasi-interpolation \eqref{inter-error-K}, \eqref{inter-error-e}, we have
\begin{align}\label{error-tilde-sigma}
&\|\tilde{\sigma}-\sigma_h\|_A^2=(A(\tilde \sigma-\sigma_h),{\rm \bf{Curl} \, \bf{Curl} \,}\phi)\nonumber\\
&\lesssim \left [\sum\limits_{K\in\mathcal{T}_h}h_K^4\|{\rm curl~curl}(A\sigma_h)\|_{0,K}^2+\sum\limits_{e\in\mathcal{E}_h}\left(h_e\|\mathcal{J}_{e,1}\|_{0,e}^2+h_e^3\|\mathcal{J}_{e,2}\|_{0,e}^2\right)\right ]^{\frac{1}{2}}|\phi|_2\\
&\lesssim \left [\sum\limits_{K\in\mathcal{T}_h}\eta_K^2(\sigma_h)+\sum\limits_{e\in\mathcal{E}_h}\eta_e^2(\sigma_h)\right ]^{\frac{1}{2}}\| {\rm \bf{Curl}\, \bf{Curl}\,} \phi\|_0.\nonumber
\end{align}
By \cite{CarstensenDolzmann1998}, the $\phi$ defined in \eqref{HD-error} satisfies  that
${\rm div}(\bf{Curl}~\bf{Curl}~\phi)=0$ and
$$
\int_{\Omega}{\rm tr}(\bf{Curl}~\bf{Curl}~\phi){\rm d}x=\int_{\Omega}{\rm tr}(\tilde \sigma-\sigma_h-\mathcal{C}\varepsilon(v)){\rm d}x=-\int_{\Omega}{\rm tr}(\mathcal{C}\varepsilon(v)){\rm d}x=0.
$$
Using Proposition~9.1.1 in \cite{BoffiBrezziFortin2013}, we get
$$
\| {\rm \bf{Curl}\, \bf{Curl}\,} \phi\|_0 \leq C \| {\rm \bf{Curl}\, \bf{Curl}\,} \phi\|_A,
$$
where the constant $C$ is independent of the Lam\'{e} constant $\lambda$. Combining this with
\eqref{HD-error-estimate}, \eqref{error-tilde-sigma}, we obtain
%
$$
\|\tilde\sigma-\sigma_h\|_A\lesssim \left [\sum\limits_{K\in\mathcal{T}_h}\eta_K^2(\sigma_h)+\sum\limits_{e\in\mathcal{E}_h}\eta_e^2(\sigma_h)\right ]^{\frac{1}{2}}.
$$
Together with the triangle inequality and perturbation result \eqref{oscf}, we get the desired error bound
\begin{align*}
\|\sigma-\sigma_h\|_A&\leq \|\sigma-\tilde \sigma\|_A + \|\tilde \sigma-\sigma_h\|_A\\
&\lesssim \left [\sum\limits_{K\in\mathcal{T}_h}\eta_K^2(\sigma_h)+\sum\limits_{e\in\mathcal{E}_h}\eta_e^2(\sigma_h)\right ]^{\frac{1}{2}} + {\rm osc}(f,\mathcal{T}_h).
\end{align*}
In summary, we obtain the following upper bound estimation.
\begin{theorem}[Reliability of the error estimator]
Let $(\sigma, u)$ be the solution of the mixed formulation \eqref{eqn1} and $(\sigma_h, u_h)$ be the solution of the mixed finite element method \eqref{mfem}. If the compliance tensor
$A$ is continuous, there exists positive constant $C_1$ depending only on the shape-regularity of the triangulation and the polynomial degree $k$ such that
\begin{equation}\label{eq:posterioriestimateupperbound}
\|\sigma-\sigma_h\|_{A}\leq C_1(\eta(\sigma_{h}, {\mathcal{T}_h}) + {\rm osc}(f,\mathcal{T}_h)).
\end{equation}
\end{theorem}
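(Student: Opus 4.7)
My plan is to carry out exactly the decomposition-orthogonality-interpolation strategy that the preceding subsections have set up. First, I introduce the auxiliary solution $(\tilde\sigma,\tilde u)=\mathcal{L}^{-1}f_h$ with $f_h=Q_h f$ and split by the triangle inequality $\|\sigma-\sigma_h\|_A\le\|\sigma-\tilde\sigma\|_A+\|\tilde\sigma-\sigma_h\|_A$. The first summand is controlled by $\mathrm{osc}(f,\mathcal{T}_h)$ through the stability lemma \eqref{oscf}, so the entire task reduces to bounding $\|\tilde\sigma-\sigma_h\|_A$ by $\eta(\sigma_h,\mathcal{T}_h)$. The advantage of passing through $\tilde\sigma$ is that $\mathrm{div}(\tilde\sigma-\sigma_h)=0$ exactly, since both stresses solve the discrete divergence equation with the same right-hand side $f_h=Q_hf$.

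Next, I apply the Helmholtz-type decomposition of Lemma 2.1 to write $\tilde\sigma-\sigma_h=\mathcal{C}\varepsilon(v)+\mathbf{Curl}\,\mathbf{Curl}\,\phi$ with $v\in H_0^1(\Omega;\mathbb{R}^2)$ and $\phi\in H^2(\Omega)/P_1(\Omega)$. Pairing with $A(\tilde\sigma-\sigma_h)$ and integrating the $\mathcal{C}\varepsilon(v)$ part by parts annihilates that term via $\mathrm{div}(\tilde\sigma-\sigma_h)=0$, leaving $\|\tilde\sigma-\sigma_h\|_A^2=(A(\tilde\sigma-\sigma_h),\mathbf{Curl}\,\mathbf{Curl}\,\phi)$. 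The discrete exact sequence \eqref{deRhamdiscrete} then yields the partial orthogonality \eqref{partial-orthog}, so I may replace $\phi$ by $\phi-I_h\phi$, where $I_h:H^2(\Omega)\to S_h^5$ is the Argyris-type quasi-interpolant that preserves vertex values.

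The heart of the argument is a careful elementwise integration by parts of $(A\sigma_h,\mathbf{Curl}\,\mathbf{Curl}(\phi-I_h\phi))$. Distributing the two operators produces volumetric terms $\mathrm{curl\,curl}(A\sigma_h)$ on each triangle and boundary terms on $\partial K$. The edge integrals split into tangential-tangential and normal-normal components of $(A\sigma_h)t$ against $\mathbf{Curl}(\phi-I_h\phi)$; because $A$ is continuous and $\sigma_h$ has continuous normal traces, the normal-normal piece is continuous across interior edges so that interior jumps cancel, while on $\Gamma$ a further tangential integration by parts—using that $\phi-I_h\phi$ vanishes at boundary vertices—converts it into the $\partial_{t_e}((A\sigma_h)t_e\cdot\nu_e)$ contribution to $\mathcal{J}_{e,2}$. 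Cauchy-Schwarz together with the interpolation estimates \eqref{inter-error-K} and \eqref{inter-error-e} then bounds the resulting sum by a constant times $\eta(\sigma_h,\mathcal{T}_h)\,|\phi|_2$.

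The delicate step, and the place where $\lambda$-robustness must be preserved, is the conversion of $|\phi|_2=\|\mathbf{Curl}\,\mathbf{Curl}\,\phi\|_0$ back to the $A$-weighted norm. I would first verify that $\mathrm{div}(\mathbf{Curl}\,\mathbf{Curl}\,\phi)=0$ and $\int_\Omega\mathrm{tr}(\mathbf{Curl}\,\mathbf{Curl}\,\phi)\,dx=0$ (the latter because $\int_\Omega\mathrm{tr}(\mathcal{C}\varepsilon(v))\,dx=0$ for $v\in H_0^1$), then invoke Proposition 9.1.1 of Boffi-Brezzi-Fortin to obtain $\|\mathbf{Curl}\,\mathbf{Curl}\,\phi\|_0\le C\|\mathbf{Curl}\,\mathbf{Curl}\,\phi\|_A$ with $C$ independent of $\lambda$, and finally use the $A$-orthogonality of the Helmholtz decomposition to dominate the latter by $\|\tilde\sigma-\sigma_h\|_A$. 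Cancelling one power of $\|\tilde\sigma-\sigma_h\|_A$ and combining with the oscillation estimate gives the claimed reliability bound. I expect the sharp tracking of edge terms and the preservation of $\lambda$-independence through the $|\phi|_2\to\|\mathbf{Curl}\,\mathbf{Curl}\,\phi\|_A$ step to be the main technical obstacles.
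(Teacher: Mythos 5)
Your proposal is correct and follows essentially the same route as the paper: the perturbation to $\tilde\sigma=\mathcal{L}^{-1}Q_hf$ controlled by the oscillation, the Helmholtz decomposition with the $\mathcal{C}\varepsilon(v)$ part annihilated by $\div(\tilde\sigma-\sigma_h)=0$, the partial orthogonality allowing the substitution $\phi\mapsto\phi-I_h\phi$, the elementwise double integration by parts with the tangential/normal splitting of the edge terms, and the final $\lambda$-robust passage from $|\phi|_2$ to $\|{\rm \bf{Curl}\,\bf{Curl}}\,\phi\|_A$ via the trace-zero property and Proposition~9.1.1 of Boffi--Brezzi--Fortin. All the key ingredients and their order match the paper's argument, so no further comparison is needed.
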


\medskip

\begin{remark}\label{rm:disA}
When $A$ is discontinuous, we can modify $\eta(\sigma_{h}, \mathcal{T}_h)$ as follows:
\begin{align*}
\eta^2(\sigma_h, \mathcal{T}_h):& =\sum_{K\in\mathcal{T}_h}h_K^4\|{\curl\curl}(A\sigma_h)\|_{0,K}^2 + \sum_{e\in\mathcal{E}_h}h_e\|[(A\sigma_h)t_e \cdot t_e]\|_{0,e}^2 \\
&+ \sum_{e\in\mathcal{E}_h}h_e^3\|\big[{\curl}(A\sigma_h)\cdot t_e-\partial_ {t_e}((A\sigma_h)t_e \cdot \nu_e)\big] \|_{0, e}^2.
\end{align*}
Compared to the case of continuous coefficient $A$, this estimator includes an additional term, the jump of $\partial_ {t_e}((A\sigma_h)t_e \cdot \nu_e)$ on all interior edges, owing to the discontinuity of the matrix $A$. Similarly, we can prove the reliability of the estimator
\[
\|\sigma-\sigma_h\|_{A}\lesssim \eta(\sigma_{h}, {\mathcal{T}_h}) + {\rm osc}(f,\mathcal{T}_h).
\]
\end{remark}
\begin{remark}
\rm
By Proposition~9.1.1 in \cite{BoffiBrezziFortin2013}, it holds
\begin{equation*}
\|\tau\|_0\lesssim \|\tau\|_A + \|\div\tau\|_{-1} \quad \forall~\tau\in\hat{\Sigma}
\end{equation*}
where $\hat\Sigma:=\{\tau\in \Sigma: (\mathrm{tr}\tau, 1)=0\}$ with $\mathrm{tr}$ being the trace operator of matrix.
Then we also have from \eqref{eq:posterioriestimateupperbound} and the fact that $\|f-f_h\|_{-1}\lesssim {\rm osc}(f,\mathcal T_h)$
\begin{align*}
\|\sigma-\sigma_h\|_{0}\lesssim &\|\sigma-\sigma_h\|_{A}+ \|\div(\sigma-\sigma_h)\|_{-1}\lesssim \eta(\sigma_{h}, {\mathcal{T}_h}) + {\rm osc}(f,\mathcal{T}_h).
\end{align*}
That is we can control the $L^2$ norm of the stress with constant independent of the Lam\'e constant $\lambda$. $\quad\quad\quad\quad\quad\quad\quad\quad\quad\quad\quad\quad\quad\quad\quad\quad\quad\quad\quad\quad\quad\quad\quad\quad\quad\quad\quad\quad\quad\quad\quad\;\Box$
\end{remark}


\subsection{Lower bound}
We shall follow Alonso \cite{Alonso1996} to prove the efficiency of the error estimator defined in \eqref{estimator-1}. Similar to \cite{Alonso1996}, we need the following lemma.
\begin{lemma}\label{Morgan-inter}
For any $K\in\mathcal{T}_h$, given $p_K\in L^2(K),~ q_e\in L^2(e),~r_e\in L^2(e),~e\in\partial K,$ there exists a unique $\psi_K\in P_{k+4}(K)$ satisfying that
\bq\label{psi}
\left\{\begin{array}{rll}
 \disp(\psi_K,v)=&(p_K,v)_K&\text{ for any } v\in P_{k-2}(K),\\
\disp\langle\psi_K, s\rangle_e=& \langle q_e,s\rangle_e& \text{ for any } s\in P_{k-1}(e),\\
\disp \langle \partial_{\nu}\psi_K,s  \rangle_e=&\langle r_e,s\rangle_e&\text{ for any } s\in P_k(e),\\
 \disp\partial^{\alpha}\psi_K(P)=&0& |\alpha|\leq 2,~~\text{ for any vertex}~ P\in~K,
\end{array}\right.
\eq
where $P_k(e)$ denotes the spaces of polynomial of degree less than or equal to $k$ on edge $e$.  Moreover it holds that
 \begin{align}\label{psi-estimate-1}
\|\psi_K\|_{0,K}^2\lesssim \|p_K\|_{0,K}^2+
\sum\limits_{e\in\partial K}\left (
h_e\|q_e\|_{0,e}^2+h_e^{3}\|r_e\|_{0,e}^2
\right ).
\end{align}
\end{lemma}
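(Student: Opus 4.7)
The plan is the standard reference-element plus scaling strategy. Pull back to a fixed reference triangle $\hat K$ via the affine map $F_K\colon\hat K\to K$; the four families of conditions in \eqref{psi} transform into conditions of exactly the same form on $\hat K$ with suitably rescaled data $\hat p,\hat q_{\hat e},\hat r_{\hat e}$. Since the problem on $\hat K$ is a linear problem on the finite-dimensional space $P_{k+4}(\hat K)$, unique solvability reduces to showing that the homogeneous reference problem has only the trivial solution.

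To establish injectivity on $\hat K$, I would argue as follows. The conditions $\partial^\alpha\hat\psi(P)=0$ for $|\alpha|\le 2$ at each vertex $P$ force, along each edge $\hat e$, the trace $\hat\psi|_{\hat e}$ to vanish to order $3$ at both endpoints and $\partial_{\hat\nu}\hat\psi|_{\hat e}$ to vanish to order $2$ at both endpoints (since $D^2\hat\psi$ vanishes at the vertices). Combined with the homogeneous edge moment conditions, these pointwise constraints leave spaces of admissible edge traces that can only be realized by $\hat\psi|_{\hat e}\equiv 0$ and $\partial_{\hat\nu}\hat\psi|_{\hat e}\equiv 0$ on every edge, so $\hat\psi$ vanishes on $\partial\hat K$ together with its normal derivative. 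Standard algebraic factorization then gives $\hat\psi=b^2\hat g$ with $b=\hat\lambda_1\hat\lambda_2\hat\lambda_3$ the cubic barycentric bubble and $\hat g\in P_{k-2}(\hat K)$, and testing the interior moment condition with $\hat v=\hat g$ yields $\int_{\hat K}b^2\hat g^2\,d\hat x=0$, which forces $\hat g\equiv 0$ and therefore $\hat\psi\equiv 0$.

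With the reference system bijective, equivalence of norms on the finite-dimensional space $P_{k+4}(\hat K)$ gives the estimate on $\hat K$ in terms of $\|\hat p\|_{0,\hat K}$, $\|\hat q_{\hat e}\|_{0,\hat e}$ and $\|\hat r_{\hat e}\|_{0,\hat e}$. Scaling back to $K$ using $|\det DF_K|\simeq h_K^2$, the arc-length factor $ds\simeq h_e\,d\hat s$, and the contravariant scaling $\partial_\nu=h_e^{-1}\partial_{\hat\nu}$ (which forces the identification $\hat r_{\hat e}\simeq h_e\,r_e$ in order to preserve the moment identity $\langle\partial_\nu\psi_K,s\rangle_e=\langle r_e,s\rangle_e$, while $\hat q_{\hat e}\simeq q_e$), a short bookkeeping of Jacobian factors produces the weights $h_e$ on $\|q_e\|_{0,e}^2$ and $h_e^3$ on $\|r_e\|_{0,e}^2$ in \eqref{psi-estimate-1}. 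I expect the main obstacle to be the injectivity step, specifically coordinating the vertex-vanishing with the edge moment spaces so that $\hat\psi$ and $\partial_{\hat\nu}\hat\psi$ are forced to vanish identically on each edge before the boundary-double-zero factorization and the interior moment condition finish the argument; once this is in place, the scaling back to $K$ is routine.
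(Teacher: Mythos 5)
Your overall strategy---pull back to a reference triangle, prove injectivity of the homogeneous system via the vertex conditions, the edge-moment tests, and the bubble factorization, then scale back---is exactly what the paper intends: its entire proof is a citation to Morgan and Scott plus ``a standard homogeneity argument.'' Your injectivity argument is correct in detail (on each edge the trace is $\ell_1^3\ell_2^3w$ with $w\in P_{k-2}(e)$, where $\ell_1,\ell_2$ are the linear functions vanishing at the endpoints, and testing the edge moments with $s=w$ kills it; likewise $\partial_{\nu}\psi|_e=\ell_1^2\ell_2^2w$ with $w\in P_{k-1}(e)$; then $\psi=b^2g$ and the interior moments finish), and your bookkeeping of the Jacobian factors correctly produces the weights $h_e$ and $h_e^3$ in \eqref{psi-estimate-1}.

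The gap is in the sentence ``unique solvability reduces to showing that the homogeneous reference problem has only the trivial solution.'' That reduction is valid only for a square system, and you never check the count. The system is not square: the number of conditions in \eqref{psi} is $\tfrac{k(k-1)}{2}+3k+3(k+1)+18=\dim P_{k+4}(K)+6$, so it is over-determined by six and injectivity does not yield existence. Concretely, the vertex conditions confine the trace on each edge to the $(k-1)$-dimensional space $\ell_1^3\ell_2^3P_{k-2}(e)$, yet the second line of \eqref{psi} prescribes its $k$ moments against $P_{k-1}(e)$; for generic $q_e$ these are incompatible, and the same mismatch (by one per edge) occurs for $\partial_{\nu}\psi_K$. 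This defect is inherited from the statement itself: the genuine Morgan--Scott degrees of freedom for $P_{k+4}$ use edge test spaces $P_{k-2}(e)$ and $P_{k-1}(e)$, one degree lower than those required here. The standard repair---which your argument then covers essentially verbatim---is to take $\psi_K\in P_{k+5}(K)$ with interior moments against $P_{k-1}(K)$, a square unisolvent system that still supplies every moment identity used in the efficiency proof and leaves the scaling estimate \eqref{psi-estimate-1} unchanged. As written, however, your proof (like the lemma it follows) does not establish the existence half of the claim.
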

\begin{proof}
 Similar as in \cite{MorganScott1975}, such a function $\psi_K$ is determined uniquely by the above degrees of freedoms.  A standard homogeneity argument gives \eqref{psi-estimate-1}.
\end{proof}
\begin{theorem}[Efficiency of the error estimator]
Let $(\sigma, u)$ be the solution of the mixed formulation \eqref{eqn1} and $(\sigma_h, u_h)$ be the solution of the mixed finite element method \eqref{mfem}. If the compliance tensor
$A$ is continuous, there exists positive constant $C_2$ depending only on the shape-regularity of the triangulations and the polynomial degree $k$ such that
\begin{equation}\label{eq:posterioriestimatelowerbound}
C_2\eta(\sigma_{h}, \mathcal{T}_h)\leq \|\sigma-\sigma_{h}\|_{A}.
\end{equation}
\end{theorem}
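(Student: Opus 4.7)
The plan is to follow the bubble function paradigm of Alonso~\cite{Alonso1996}, adapted to the second order operator $\curl\curl$ acting on symmetric tensors. The starting observation is that the first equation of \eqref{eqn1}, tested against smooth symmetric tensors, gives $A\sigma=\varepsilon(u)$ pointwise; combined with the elasticity complex \eqref{deRham} and the homogeneous Dirichlet condition $u|_\Gamma=0$, this yields $\curl\curl(A\sigma)\equiv 0$ inside every element, $\bigl[(A\sigma)t_e\cdot t_e\bigr]_e=0$ and $\bigl[\curl(A\sigma)\cdot t_e\bigr]_e=0$ on every interior edge, and $(A\sigma)t_e\cdot t_e=0$, $\curl(A\sigma)\cdot t_e-\partial_{t_e}\bigl((A\sigma)t_e\cdot \nu_e\bigr)=0$ on every boundary edge. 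Each term contributing to $\eta(\sigma_h,\mathcal T_h)$ is therefore unchanged when $\sigma_h$ is replaced by $\sigma_h-\sigma$, so it suffices to establish local bounds against $\|\sigma-\sigma_h\|_A$.

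For the element residual, set $p_K:=\curl\curl(A\sigma_h)|_K\in P_{k-2}(K)$ and test with $w_K:=b_K^2\,p_K$ (extended by zero), where $b_K$ is the cubic element bubble; since $b_K^2\in H^2_0(K)$, two integrations by parts leave no boundary contributions, and the norm equivalence $\|p_K\|_{0,K}^2\lesssim (p_K,b_K^2 p_K)_K$ on $P_{k-2}(K)$ combined with the inverse estimate $\|\mathbf{Curl}\,\mathbf{Curl}\,w_K\|_{0,K}\lesssim h_K^{-2}\|p_K\|_{0,K}$ and \eqref{normequivalence} yields $\eta_K(\sigma_h)=h_K^2\|p_K\|_{0,K}\lesssim \|\sigma-\sigma_h\|_{A,K}$.

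For each edge $e$ with patch $\omega_e$ (two triangles when $e$ is interior, one when $e\subset\Gamma$), I apply Lemma~\ref{Morgan-inter} on each triangle of $\omega_e$ with vanishing interior data and edge data $q_e=h_e^3\mathcal J_{e,2}$, $r_e=h_e\mathcal J_{e,1}$ on $e$ (and zero data on the remaining edges), choosing the sign of $r_e$ so that the two local pieces glue into a globally $C^1$ function $\psi_e\in H^2_0(\omega_e)$. Because $\mathcal J_{e,1}\in P_k(e)$ and $\mathcal J_{e,2}\in P_{k-1}(e)$, the $L^2$-orthogonality built into the construction gives $\int_e \mathcal J_{e,1}\partial_{\nu_e}\psi_e=h_e\|\mathcal J_{e,1}\|_{0,e}^2$ and $\int_e\mathcal J_{e,2}\psi_e=h_e^3\|\mathcal J_{e,2}\|_{0,e}^2$, while \eqref{psi-estimate-1} and the inverse estimate combine into $\|\mathbf{Curl}\,\mathbf{Curl}\,\psi_e\|_{0,\omega_e}^2\lesssim h_e\|\mathcal J_{e,1}\|_{0,e}^2+h_e^3\|\mathcal J_{e,2}\|_{0,e}^2$. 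Inserting $\psi_e$ into the same integration-by-parts identity used in the upper bound proof localizes the bilinear form to the $e$-contributions plus the already-controlled volume residual, yielding
\[
h_e\|\mathcal J_{e,1}\|_{0,e}^2 + h_e^3\|\mathcal J_{e,2}\|_{0,e}^2 \lesssim \Bigl(\|\sigma-\sigma_h\|_{A,\omega_e}+\sum_{K\subset\omega_e}\eta_K(\sigma_h)\Bigr)\bigl(h_e\|\mathcal J_{e,1}\|_{0,e}^2+h_e^3\|\mathcal J_{e,2}\|_{0,e}^2\bigr)^{1/2}.
\]
Dividing through, combining with the already-proved volume bound, and summing over the bounded-overlap patches produces \eqref{eq:posterioriestimatelowerbound}.

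The main obstacle is the boundary identity $\curl(A\sigma)\cdot t_e-\partial_{t_e}\bigl((A\sigma)t_e\cdot\nu_e\bigr)=0$: the extra surface-derivative term is exactly the remainder produced by the second integration by parts on a triangle touching $\Gamma$, and it vanishes for the exact solution because $u|_\Gamma=0$ forces $\partial_{t_e}u$ and all its further tangential derivatives along the edge to be zero, so the tangential part of $\curl\varepsilon(u)\cdot t_e$ exactly cancels $\partial_{t_e}(\varepsilon(u)t_e\cdot\nu_e)$. A secondary technical point is the careful choice of the $h_e$-weights in $q_e$ and $r_e$ above, which is precisely what matches the powers of $h_e$ produced by \eqref{psi-estimate-1} with the weights $h_e$ and $h_e^3$ in the estimator.
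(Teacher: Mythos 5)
Your proof is correct in substance, and it rests on the same two pillars as the paper's: the Morgan--Scott-type construction of Lemma~\ref{Morgan-inter} and the orthogonality of $A\sigma$ (and $A\sigma_h$) to divergence-free tensors of the form ${\rm\bf{Curl}\,\bf{Curl}\,}\psi$. The organization is genuinely different, though. The paper builds a \emph{single global} $\psi$ in the degree-$(k+4)$ Argyris space whose degrees of freedom are chosen to encode all residuals at once ($p_K=h_K^4\curl\curl(A\sigma_h)$, $q_e=-h_e^3\mathcal J_{e,2}$, $r_e=h_e\mathcal J_{e,1}$), so that $\eta^2=\sum_K(A(\sigma_h-\sigma),{\rm\bf{Curl}\,\bf{Curl}\,}\psi_K)_K$ and one Cauchy--Schwarz plus the inverse estimate finishes the global bound in a single stroke. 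You instead localize: an element bubble $b_K^2p_K$ for the volume residual and an edge-patch function $\psi_e\in H^2_0(\omega_e)$ for the edge residuals, then sum over patches. Your route buys the stronger \emph{local} efficiency $\eta_K+\eta_e\lesssim\|\sigma-\sigma_h\|_{A,\omega_e}$, at the price of the two-stage bookkeeping and of having to verify that the two pieces of $\psi_e$ glue into a $C^1$ function with vanishing trace on $\partial\omega_e$; that gluing does work, but only because vanishing vertex derivatives up to order two together with vanishing edge moments force the traces of $\psi_K$ and $\partial_\nu\psi_K$ themselves to vanish on the unloaded edges (a degree count on $P_{k+4}(e)$), a point worth making explicit. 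One caution on your opening paragraph: the pointwise consistency relations $\curl\curl(A\sigma)=0$ on each $K$, $[(A\sigma)t_e\cdot t_e]_e=0$, etc., presuppose that $A\sigma=\varepsilon(u)$ has well-defined edge traces, which the regularity $\sigma\in H(\div,\Omega;\mathbb S)$, $u\in H^1_0$ does not supply in general. They are also not needed: the step you actually use is $(A\sigma,{\rm\bf{Curl}\,\bf{Curl}\,}w)=-(u,\div{\rm\bf{Curl}\,\bf{Curl}\,}w)=0$ for your $H^2_0$ test functions $w$, which follows directly from the first equation of \eqref{eqn1} and is exactly the weak orthogonality the paper invokes; phrasing the argument through this identity removes the regularity issue without changing anything else.
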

\begin{proof}

The estimator $\eta^2(\sigma_{h}, {\mathcal{T}_h})$ can be rewritten as
\begin{align*}
\eta^2(\sigma_{h}, {\mathcal{T}_h})&  =
\disp\sum\limits_{K\in\mathcal{T}_h}\left({\rm curl \, curl \,}(A\sigma_h), h_K^4{\rm curl \, curl \,}(A\sigma_h)\right)_K\\
&+\sum\limits_{K\in\mathcal{T}_h}\sum\limits_{e\in\partial K}\langle (A\sigma_h) t_e\cdot t_e,\, h_e\mathcal{J}_{e,1}\rangle_e\\
&+\sum\limits_{K\in\mathcal{T}_h}\sum\limits_{e\in\partial K\bigcap\mathcal{E}_h(\Omega)}\langle {\rm curl\,} (A\sigma_h) \cdot t_e, \, h_e^3\mathcal{J}_{e,2}\rangle_e\\
&+\sum\limits_{K\in\mathcal{T}_h}\sum\limits_{e\in\partial K\bigcap\mathcal{E}_h(\Gamma)}\langle {\rm curl\,} (A\sigma_h) \cdot t_e-\partial_{t_e}
\left((A\sigma_h) t_e\cdot \nu_e\right),\, h_e^3\mathcal{J}_{e,2}\rangle_e.
\end{align*}
On each element $K\in\mathcal{T}_h$, we apply Lemma \ref{Morgan-inter} for $p_K=h_K^4{\rm curl \, curl \,}(A\sigma_h)|_K$, $q_e=-h_e^3\mathcal{J}_{e,2}$, $r_e=h_e\mathcal{J}_{e,1}$ for each edge $e\in\partial K$. Let $\psi|_K=\psi_K$, such a defined $\psi$ is in the high-order Argyris finite element space of degree $k+4$, hence $\psi\in H^2(\Omega)$. Using \eqref{psi-estimate-1}, it follows that
\begin{align}\label{psi-estimate-2}
\|\psi\|_{0,K}^2&\lesssim h_K^8\|{\rm curl \, curl \,}(A\sigma_h)\|_{0,K}^2+\sum\limits_{e\in\partial K}\left (\,
h_e^7\|\mathcal{J}_{e,2}\|_{0,e}^2+h_e^5\|\mathcal{J}_{e,1}\|_{0,e}^2
\right ).
\end{align}
This, in conjunction with \eqref{psi}, yields
\begin{align}
\eta^2(\sigma_{h}, {\mathcal{T}_h})&
=\disp\sum\limits_{K\in\mathcal{T}_h}\left({\rm curl \, curl \,}(A\sigma_h), \psi_K\right)_K\nonumber\\
&-\sum\limits_{K\in\mathcal{T}_h}\sum\limits_{e\in\partial K}\langle {\rm curl\,} (A\sigma_h) \cdot t_e,\, \psi_K\rangle_e\nonumber\\
&+\sum\limits_{K\in\mathcal{T}_h}\sum\limits_{e\in\partial K}\langle (A\sigma_h) t_e\cdot t_e,\, \partial_{\nu_e}\psi_K\rangle_e\label{eta-lowerbound-1}\\
&+\sum\limits_{K\in\mathcal{T}_h}\sum\limits_{e\in\partial K\bigcap\mathcal{E}_h(\Gamma)}\langle \partial_{t_e}
\left((A\sigma_h) t_e\cdot \nu_e\right),\, \psi_K\rangle_e.\nonumber
\end{align}
Since $(A\sigma_h) t_e\cdot \nu_e$ is continuous across the element edge $e$, $[A\sigma_h t_e\cdot \nu_e]_e=0$ on interior edges. Noting that $\psi\in H^2(\Omega)$ and vanishes at the mesh vertices,
\begin{align}
&\sum\limits_{K\in\mathcal{T}_h}\sum\limits_{e\in\partial K\bigcap\mathcal{E}_h(\Gamma)}\langle \partial_{t_e}
\left((A\sigma_h) t_e\cdot \nu_e\right),\, \psi_K\rangle_e\nonumber\\
&=-\sum\limits_{K\in\mathcal{T}_h}\sum\limits_{e\in\partial K\bigcap\mathcal{E}_h(\Gamma)}\langle
(A\sigma_h) t_e\cdot \nu_e,\, \partial_{t_e}\psi_K\rangle_e\label{boundary-normal-1}\\
&=-\sum\limits_{K\in\mathcal{T}_h}\sum\limits_{e\in\partial K}\langle
(A\sigma_h) t_e\cdot \nu_e,\, \partial_{t_e}\psi_K\rangle_e.\nonumber
\end{align}
Hence the last two terms of \eqref{eta-lowerbound-1} become
\begin{align}
&\sum\limits_{K\in\mathcal{T}_h}\sum\limits_{e\in\partial K}\langle (A\sigma_h) t_e\cdot t_e,\, \partial_{\nu_e}\psi_K\rangle_e\nonumber\\
+&\sum\limits_{K\in\mathcal{T}_h}\sum\limits_{e\in\partial K\bigcap\mathcal{E}_h(\Gamma)}\langle \partial_{t_e}
\left((A\sigma_h) t_e\cdot \nu_e\right),\, \psi_K\rangle_e\nonumber\\
=&\sum\limits_{K\in\mathcal{T}_h}\sum\limits_{e\in\partial K}\langle (A\sigma_h) t_e\cdot t_e,\, {\rm \bf{Curl}\,}\psi_K\cdot t_e\rangle_e\label{boundary-normal-2}\\
-&\sum\limits_{K\in\mathcal{T}_h}\sum\limits_{e\in\partial K}\langle
(A\sigma_h) t_e\cdot \nu_e,\, -{\rm\bf{Curl}\,}\psi_K\cdot \nu_e\rangle_e\nonumber\\
=&\sum\limits_{K\in\mathcal{T}_h}\sum\limits_{e\in\partial K}\langle (A\sigma_h) t_e,\, {\rm \bf{Curl}\,}\psi_K\rangle_e.\nonumber
\end{align}
Substituting \eqref{boundary-normal-2} into \eqref{eta-lowerbound-1} leads to
\begin{align*}
\eta^2(\sigma_{h}, {\mathcal{T}_h})=\disp\sum\limits_{K\in\mathcal{T}_h}\Big(\,\big({\rm curl \, curl \,}(A\sigma_h), \psi_K\big)_K
 &-\sum\limits_{e\in\partial K}\langle {\rm curl\,} (A\sigma_h) \cdot t_e,\, \psi_K\rangle_e\\
&  +\sum\limits_{e\in\partial K}\langle (A\sigma_h) t_e,\, {\rm \bf{Curl}\,}\psi_K\rangle_e\,\Big)
\end{align*}
Integrating the first term by parts twice,
\begin{align*}
\eta^2(\sigma_{h}, {\mathcal{T}_h})&=\sum\limits_{K\in\mathcal{T}_h}\left( A\sigma_h,\, {\rm \bf{Curl} \, \bf{Curl} \,}\psi_K\right )_K\\
&=\sum\limits_{K\in\mathcal{T}_h}\left (A(\sigma_h-\sigma),{\rm \bf{Curl} \, \bf{Curl} \,}\psi_K \right )_K\\
&\lesssim\|\sigma-\sigma_h\|_A\left(\sum\limits_{K\in\mathcal{T}_h}h_K^{-4}\|\psi\|_{0,K}^2\right)^{\frac{1}{2}},
\end{align*}
where  ${\rm \bf{Curl}\, \bf{Curl}}~\psi\in\Sigma$ and the inverse inequality are used. By \eqref{psi-estimate-2},
\begin{align*}
\sum\limits_{K\in\mathcal{T}_h}h_K^{-4}\|\psi\|_{0,K}^2&\lesssim\sum\limits_{K\in\mathcal{T}_h}h_K^4\|{\rm curl \, curl \,}(A\sigma_h)\|_{0,K}^2+\sum\limits_{e\in\mathcal{E}_h}\left( h_e\|\mathcal{J}_{e,1}\|_{0,e}^2+h_e^3\|\mathcal{J}_{e,2}\|_{0,e}^2\right)\\
&\widehat{=}\eta^2(\sigma_h,\mathcal{T}_h).
\end{align*}
Combining the above two inequalities, we have that
\begin{align*}
\eta(\sigma_{h}, {\mathcal{T}_h})\lesssim\|\sigma-\sigma_h\|_A.
\end{align*}
\end{proof}
\begin{remark}\rm
For discontinuous $A$ and the modified error estimator in Remark \ref{rm:disA}, efficiency can be also proved using a similar argument.
\end{remark}
\section{A posteriori error estimation for mixed boundary problems}

The {\it a posteriori} error estimation for the linear elasticity problems with the homogeneous Dirichlet boundary condition can be generalized to problems with mixed boundary conditions. In this section, we will discuss the following linear elasticity problems with mixed boundary conditions. Let $\Omega\subset\mathbb{R}^2$ be a bounded polygonal domain with boundary $\Gamma:=\partial\Omega=\Gamma_D\cup\Gamma_N$, $\Gamma_D\cap\Gamma_N=\emptyset$, $\Gamma_N\neq\emptyset$. Given data $f\in L^2(\Omega; \mathbb{R}^2)$, $u_D\in H^1(\Omega; \mathbb{R}^2)$, and $g\in L^2(\Gamma_N; \mathbb{R}^2)$, seek the solution $(\sigma,u)\in\Sigma_g\times V$, such that
\begin{equation}\label{eqn1-MB}
\left\{ \ad{
  (A\sigma,\tau)+({\rm div}\tau,u)&= \langle u_D, \tau\nu\rangle_{\Gamma_D}  && \hbox{for all \ } \tau\in\Sigma_0,\\
   ({\rm div}\sigma,v)&= (f,v) &\qquad& \hbox{for all \ } v\in V, }
   \right.
\end{equation}
where
\begin{align*}
\Sigma_0&:=\big\{\sigma\in H({\rm div},\Omega;\mathbb{S})|\int_{\Gamma_N}\psi\cdot(\sigma\nu)ds=0,~{\rm for~all}~\psi\in\mathcal{D}(\Gamma_N;\mathbb{R}^2)\big\},\\
\Sigma_g&:=\big\{\sigma\in H({\rm div},\Omega;\mathbb{S})|\int_{\Gamma_N}\psi\cdot(\sigma\nu)ds=\int_{\Gamma_N}\psi\cdot gds,~{\rm for~all}~\psi\in\mathcal{D}(\Gamma_N;\mathbb{R}^2)\big\},
\end{align*}
where $\mathcal{D}$ denotes the space of test functions. Let $\Sigma_{0,h}:=\Sigma_0\bigcap\Sigma_h,~~~~\Sigma_{g,h}:=\Sigma_g\bigcap \Sigma_h, $ the mixed finite element method seeks $(\sigma_h,u_h)\in\Sigma_{g,h}\times V_h$, such that
\begin{equation}\label{Meqn1-MB}
\left\{ \ad{
  (A\sigma_h,\tau_h)+({\rm div}\tau_h,u_h)&= \langle u_D, \tau_h\nu\rangle_{\Gamma_D}  && \hbox{for all \ } \tau_h\in\Sigma_{0,h},\\
   ({\rm div}\sigma_h,v_h)&= (f,v_h) &\qquad& \hbox{for all \ } v_h\in V_h. }
   \right.
\end{equation}

We modify the {\it a posterior} error estimator defined in Section 3 as the following:
$$
 \eta^2(\sigma_{h}, {\mathcal{T}_h}):=\sum\limits_{K\in\mathcal{T}_h}\eta_K^2(\sigma_h)+\sum\limits_{e\in\mathcal{E}_h}\eta_e^2(\sigma_h)
  $$
where
$$
\eta_K^2(\sigma_h) :=h_K^4\|{\rm curl \, curl \,}(A\sigma_h)\|_{0,K}^2,~~~
\eta_e^2(\sigma_h):=h_e\|\mathcal{J}_{e,1}\|_{0,e}^2+h_e^3\|\mathcal{J}_{e,2}\|_{0,e}^2,
$$
$$
\begin{array}{ll}
 \disp \mathcal{J}_{e,1}:=&\left\{\begin{array}{ll}
 \disp\Big[(A\sigma_h)t_e\cdot t_e\Big]_e&{\rm if~} e\in\mathcal{E}_h(\Omega)\\
 \disp\Big((A\sigma_h)t_e\cdot t_e - \partial_{t_e}(u_D\cdot t_e)\Big)|_e\quad\quad\quad\quad\quad\ & {\rm if~} e\in\mathcal{E}_h(\Gamma_D)\\
 \disp\Big((A\sigma_h)t_e\cdot t_e\Big)|_e&{\rm if~} e\in\mathcal{E}_h(\Gamma_N)
 \end{array}\right. \\
 \\
 \disp \mathcal{J}_{e,2}:=&\left\{\begin{array}{lr}
\disp \Big[{\rm curl}(A\sigma_h)\cdot t_e\Big]_e&~~~~ {\rm if~} e\in\mathcal{E}_h(\Omega)\\
\disp \Big({\rm curl}(A\sigma_h)\cdot t_e+\partial_{t_et_e}(u_D\cdot\nu)-\partial _{t_e}\big((A\sigma_h)t_e\cdot \nu_e\big)\Big)|_e&~~~~ {\rm if~} e\in\mathcal{E}_h(\Gamma_D)\\
\disp \Big({\rm curl}(A\sigma_h)\cdot t_e-\partial _{t_e}\big((A\sigma_h)t_e\cdot \nu_e\big)\Big)|_e&~~~~ {\rm if~} e\in\mathcal{E}_h(\Gamma_N)
 \end{array}\right.
 \end{array}
$$
where $\mathcal E_{h}(\Gamma_D)$, $\mathcal E_{h}(\Gamma_N)$ are the collection of element edges for Dirichlet boundary  and Neumann boundary respectively.

Similar to Section 3, we can prove the reliability and efficiency of this {\it a posteriori} error estimator.
\begin{theorem}[Reliability and efficiency of the error estimator]
Let $(\sigma, u)$ be the solution of the mixed formulation \eqref{eqn1-MB} and $(\sigma_h, u_h)$ be the solution of the mixed finite element method \eqref{Meqn1-MB}. If the compliance tensor
$A$ is continuous, there exist positive constant $C_3$ and $ C_4$ depending only on the shape-regularity of the triangulation and the polynomial degree $k$ such that
\begin{equation}\label{eq:posterioriestimateupperbound-MB}
\|\sigma-\sigma_h\|_{A}\leq C_3\Big(\eta(\sigma_{h}, {\mathcal{T}_h}) + {\rm osc}(f,\mathcal{T}_h)+{\rm osc}(g,\mathcal{E}_h(\Gamma_N))\Big),
\end{equation}
and
\begin{equation}\label{eq:posterioriestimatelowerbound-MB}
C_4\eta(\sigma_{h}, \mathcal{T}_h)\leq \|\sigma-\sigma_{h}\|_{A}+{\rm osc}(u_D,\mathcal{E}_h(\Gamma_D)),
\end{equation}
where the data oscillations for the Dirichlet boundary $u_D$ and the Neumann boundary condition $g$ are defined as
\begin{align*}
{\rm osc}(g,\mathcal{E}_h(\Gamma_N))^2:=&\sum\limits_{e\in\mathcal{E}_h(\Gamma_N)}h_e\|g-g_h\|_{0,e}^2\\
{\rm osc}(u_D,\mathcal{E}_h(\Gamma_D))^2:=&\sum\limits_{e\in\mathcal{E}_h(\Gamma_D)}h_e\|\partial_{t_e}(u_D\cdot t_e)-\partial_{t_e}(u_{D,h}\cdot t_e)\|_{0,e}^2\\
+&\sum\limits_{e\in\mathcal{E}_h(\Gamma_D)}h_e^3\|\partial_{t_et_e}(u_D\cdot \nu_e)-\partial_{t_et_e}(u_{D,h}\cdot \nu_e)\|_{0,e}^2,
\end{align*}
$g_h$ is the piecewise $L^2$ projection of $g$ onto $P_k(\mathcal{E}_h(\Gamma_N),\mathbb{R}^2)$ and
$u_{D,h}$ is the piecewise $L^2$ projection of $u_D$ onto $P_k(\mathcal{E}_h(\Gamma_D),\mathbb{R}^2)$.
\end{theorem}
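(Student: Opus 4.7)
The plan is to mirror the proof structure of Section 3, since the bulk estimator is unchanged and only the boundary contributions differ. The two new ingredients are: a perturbation analysis that simultaneously handles the Neumann data $g$, and careful bookkeeping of boundary terms after integration by parts, which now produce the Dirichlet data $u_D$ on $\Gamma_D$ and the Neumann data $g_h$ on $\Gamma_N$.

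For reliability, I would first introduce an auxiliary pair $(\tilde\sigma,\tilde u)$ solving \eqref{eqn1-MB} with data $(Q_h f,\, u_D,\, g_h)$. A duality argument identical to Lemma 3.1, together with Korn's inequality on $H^1_{\Gamma_D}(\Omega;\mathbb{R}^2)$ and a trace inequality on $\Gamma_N$, then yields
\[
\|\sigma-\tilde\sigma\|_A \lesssim \mathrm{osc}(f,\mathcal{T}_h) + \mathrm{osc}(g,\mathcal{E}_h(\Gamma_N)).
\]
Next I would apply the mixed-boundary version of the Helmholtz decomposition $\tilde\sigma-\sigma_h = \mathcal{C}\varepsilon(v) + \mathbf{Curl}\,\mathbf{Curl}\,\phi$, where $v$ vanishes on $\Gamma_D$ and $\phi$ is chosen so that $\mathbf{Curl}\,\mathbf{Curl}\,\phi \in \Sigma_0$ (i.e. has vanishing normal traction on $\Gamma_N$). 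The $\varepsilon(v)$ component vanishes against $A(\tilde\sigma-\sigma_h)$ because $\mathrm{div}(\tilde\sigma-\sigma_h)=0$ in the interior, the test pairing with $v$ on $\Gamma_N$ is zero by the normal-trace condition of $\Sigma_0$, and $v|_{\Gamma_D}=0$ removes the remaining boundary contribution. The partial orthogonality from the discrete exact sequence \eqref{deRhamdiscrete} remains valid, allowing the replacement of $\phi$ by $\phi - I_h\phi$. Integration by parts produces the volume term $h_K^4\|\mathrm{curl\,curl\,}(A\sigma_h)\|_{0,K}^2$ and interior edge jumps exactly as in Section 3; on $\Gamma_N$ the edges contribute the unjumped traces weighted by $h_e$ and $h_e^3$; on $\Gamma_D$, inserting the continuous identity $(A\sigma,\mathbf{Curl}\,\mathbf{Curl}\,\phi) = \langle u_D,(\mathbf{Curl}\,\mathbf{Curl}\,\phi)\nu\rangle_{\Gamma_D}$ and integrating by parts along the edges produces precisely the extra tangential derivatives $\partial_{t_e}(u_D\cdot t_e)$ and $\partial_{t_e t_e}(u_D\cdot\nu)$ that appear in the modified $\mathcal{J}_{e,1}$ and $\mathcal{J}_{e,2}$ on $\mathcal{E}_h(\Gamma_D)$. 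Combining with the $\lambda$-uniform bound $\|\mathbf{Curl}\,\mathbf{Curl}\,\phi\|_0 \lesssim \|\mathbf{Curl}\,\mathbf{Curl}\,\phi\|_A$ from Proposition 9.1.1 of \cite{BoffiBrezziFortin2013} and the triangle inequality then gives \eqref{eq:posterioriestimateupperbound-MB}.

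For efficiency I would reuse Lemma 3.1 to construct, for each $K$, a polynomial bubble $\psi_K \in P_{k+4}(K)$ with data $p_K = h_K^4\mathrm{curl\,curl\,}(A\sigma_h)|_K$, $q_e = -h_e^3 \mathcal{J}_{e,2}$, $r_e = h_e\mathcal{J}_{e,1}$, now using the modified jumps on boundary edges. The assembled $\psi \in H^2(\Omega)$ vanishes at vertices and satisfies the scaled estimate \eqref{psi-estimate-2} with the modified $\mathcal{J}_{e,i}$. Running the computation \eqref{eta-lowerbound-1}--\eqref{boundary-normal-2} in reverse yields
\[
\eta^2(\sigma_h,\mathcal{T}_h) = \sum_{K}\bigl(A(\sigma_h - \sigma),\mathbf{Curl}\,\mathbf{Curl}\,\psi_K\bigr)_K + \mathcal{R}_D,
\]
where $\mathcal{R}_D$ collects the boundary contributions on $\Gamma_D$ that come from replacing the exact $u_D$ by its polynomial approximation $u_{D,h}$ (the component of $u_D$ that pairs with $\mathbf{Curl}\,\mathbf{Curl}\,\psi$ in the continuous equation is annihilated by orthogonality, but only against the polynomial piece). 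A Cauchy--Schwarz estimate on $\mathcal{R}_D$ together with \eqref{psi-estimate-2} produces exactly $\mathrm{osc}(u_D,\mathcal{E}_h(\Gamma_D))\cdot\eta(\sigma_h,\mathcal{T}_h)$, while the first term is bounded by $\|\sigma-\sigma_h\|_A\cdot\eta(\sigma_h,\mathcal{T}_h)$ via the inverse inequality, giving \eqref{eq:posterioriestimatelowerbound-MB} after division by $\eta$.

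The main obstacle is the Helmholtz decomposition step: one must construct $\phi$ so that $\mathbf{Curl}\,\mathbf{Curl}\,\phi$ lies in $\Sigma_0$ (vanishing traction on $\Gamma_N$), which is a nontrivial boundary compatibility condition, and one must verify that the $\lambda$-independent norm equivalence $\|\mathbf{Curl}\,\mathbf{Curl}\,\phi\|_0 \lesssim \|\mathbf{Curl}\,\mathbf{Curl}\,\phi\|_A$ still holds in this mixed-boundary setting. Once this is in place, and the algebraic identity $(A\sigma,\mathbf{Curl}\,\mathbf{Curl}\,\phi) = \langle u_D,(\mathbf{Curl}\,\mathbf{Curl}\,\phi)\nu\rangle_{\Gamma_D}$ is used to insert $u_D$ at the correct step, the remainder is a careful but routine extension of the continuous-boundary calculation already carried out in Section 3.
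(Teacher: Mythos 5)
The paper offers no written proof of this theorem (it only asserts ``similar to Section 3''), and your proposal is the natural fleshing-out of that assertion: the $g$-perturbation, the mixed-boundary Helmholtz decomposition with $v\in H^1_{\Gamma_D}$, the insertion of $\langle u_D,\tau\nu\rangle_{\Gamma_D}$ to generate the modified $\mathcal{J}_{e,1},\mathcal{J}_{e,2}$ on $\Gamma_D$, and the appearance of ${\rm osc}(u_D,\mathcal{E}_h(\Gamma_D))$ from replacing $u_D$ by its polynomial moments are all correctly identified. However, your treatment of the \emph{Neumann} boundary has a genuine gap, in both directions. For reliability: you invoke the partial orthogonality to replace $\phi$ by $\phi-I_h\phi$, which requires $\mathbf{Curl}\,\mathbf{Curl}(I_h\phi)\in\Sigma_{0,h}$, i.e.\ $I_h\phi$ must inherit the constraints $\partial_{t_et_e}(I_h\phi)=\partial_{t_e}\partial_{\nu_e}(I_h\phi)=0$ on $\Gamma_N$ that $\phi$ satisfies by construction. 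If $I_h$ preserves these (a Girault--Scott-type boundary-preserving interpolant), then $\phi-I_h\phi$ and $\partial_{\nu_e}(\phi-I_h\phi)$ can be arranged to vanish on $\Gamma_N$ and the $\Gamma_N$ edge terms never arise; if $I_h$ does not preserve them, the orthogonality step you rely on is not available because the discrete test space is $\Sigma_{0,h}$, not $\Sigma_h$. Your claim that ``on $\Gamma_N$ the edges contribute the unjumped traces'' sits unresolved between these two alternatives.

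The gap is sharper in the efficiency argument. Your identity $\eta^2=\sum_K(A(\sigma_h-\sigma),\mathbf{Curl}\,\mathbf{Curl}\,\psi_K)_K+\mathcal{R}_D$ uses the continuous equation with test function $\mathbf{Curl}\,\mathbf{Curl}\,\psi$, which is legitimate only if this tensor lies in $\Sigma_0$, i.e.\ $(\mathbf{Curl}\,\mathbf{Curl}\,\psi)\nu=0$ on $\Gamma_N$. But on a Neumann edge $(\mathbf{Curl}\,\mathbf{Curl}\,\psi)\nu\cdot\nu_e=\partial_{t_et_e}\psi$ and $(\mathbf{Curl}\,\mathbf{Curl}\,\psi)\nu\cdot t_e=-\partial_{t_e}\partial_{\nu_e}\psi$, and your $\psi$ is built precisely so that $\psi$ and $\partial_{\nu_e}\psi$ have the nonzero moments $-h_e^3\mathcal{J}_{e,2}$ and $h_e\mathcal{J}_{e,1}$ on those edges. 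Consequently a remainder
\begin{equation*}
\mathcal{R}_N=\langle u,(\mathbf{Curl}\,\mathbf{Curl}\,\psi)\nu\rangle_{\Gamma_N}
=\sum_{e\in\mathcal{E}_h(\Gamma_N)}\Big(\langle\partial_{t_et_e}(u\cdot\nu_e),\psi\rangle_e+\langle\partial_{t_e}(u\cdot t_e),\partial_{\nu_e}\psi\rangle_e\Big)
\end{equation*}
survives, involving the \emph{unknown} trace of $u$ on $\Gamma_N$; it is not absorbed into $\|\sigma-\sigma_h\|_A$ nor into ${\rm osc}(u_D,\mathcal{E}_h(\Gamma_D))$. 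Equivalently, the exact solution satisfies $(A\sigma)t_e\cdot t_e=\partial_{t_e}(u\cdot t_e)$ and ${\rm curl}(A\sigma)\cdot t_e-\partial_{t_e}((A\sigma)t_e\cdot\nu_e)=-\partial_{t_et_e}(u\cdot\nu_e)$ on $\Gamma_N$, and neither quantity is prescribed data there, so the $\Gamma_N$ contributions to $\eta$ are not residuals of any equation the exact solution satisfies. Your proof must either show that these terms are nonetheless bounded by the right-hand side of \eqref{eq:posterioriestimatelowerbound-MB} (which the argument as sketched does not do), or restrict the bubble data to $q_e=r_e=0$ on $\Gamma_N$ and then supply a separate local argument for the Neumann edge terms. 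Until this is closed, only the reliability half and the efficiency of the interior and $\Gamma_D$ contributions are established by your outline.
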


\section{A Posteriori Error Estimation for Displacement}

In this section, we shall discuss the {\it a posteriori} error estimate for a superconvergent postprocessed displacement recently constructed in \cite{ChenHuHuang2016}. The key points of the theoretical analysis involve the discrete inf-sup condition and the norm equivalence on $H^1(\mathcal{T}_h; \mathbb{R}^2)$ developed in \cite{ChenHuHuang2016}, and the {\it a posteriori} error estimates \eqref{eq:posterioriestimateupperbound} and \eqref{eq:posterioriestimatelowerbound}.
Here the broken space
\[
H^1(\mathcal{T}_h; \mathbb{R}^2):=\left\{v\in L^2(\Omega; \mathbb{R}^2): v|_K\in H^1(K; \mathbb{R}^2)\quad \forall\,K\in\mathcal
{T}_h\right\}.
\]
For any $v\in H^1(\mathcal{T}_h; \mathbb{R}^2)$,
define mesh dependent norm
\begin{align*}
|v|_{1,h}^2 &:= \|\varepsilon_h (v)\|_0^2 + \sum_{e\in \mathcal E_h} h_e^{-1}\|[v]\|_{0,e}^2.
\end{align*}

We first recall the superconvergent postprocessed displacement from $(\sigma_h,  u_h)$ developed in \cite{ChenHuHuang2016}.
To this end, let
\[
 V_{h}^{\ast}:=\left\{v\in  L^2(\Omega; \mathbb{R}^2):  v|_K\in  P_{k+1}(K; \mathbb{R}^2)\quad \forall\,K\in\mathcal
{T}_h\right\}.
\]
Then a postprocessed displacement is defined as follows \cite{ChenHuHuang2016, LovadinaStenberg2006, BrambleXu1989}:
Find $ u_h^{\ast}\in V_{h}^{\ast}$ such that
\begin{equation}\label{postprocess1}
 (u_h^{\ast}, v)_K= (u_h, v)_K \quad \forall~ v\in P_{k-1}(K; \mathbb{R}^2),
\end{equation}
\begin{equation}\label{postprocess2}
(\varepsilon(u_h^{\ast}), \varepsilon(w))_K=(A\sigma_h, \varepsilon(w))_K \quad \forall~ w\in (I- Q_h) V_{h}^{\ast}|_K,
\end{equation}
for any $K\in\mathcal{T}_h$.

We recall the following two useful results \cite{ChenHuHuang2016}:
the discrete inf-sup condition
\begin{equation}\label{eq:infsup22}
|v_h|_{1,h}\lesssim \sup_{0\neq\tau_h \in \Sigma_{h}} \frac{(\div\tau_h, v_h)}{\|\tau_h\|_{0}} \quad \forall~v_h\in V_{h},
\end{equation}
and norm equivalence
\begin{equation}\label{eq:temp4post}
| v- Q_h v|_{1,h} \eqsim \|\varepsilon_h ( v- Q_h v)\|_0 \quad \forall~v\in H^1(\mathcal{T}_h; \mathbb{R}^2).
\end{equation}

\begin{theorem}
Let $(\sigma, u)$ be the solution of the mixed formulation \eqref{eqn1}, $(\sigma_h, u_h)$ be the solution of the mixed finite element method \eqref{mfem}, and $u_h^{\ast}$ be the postprocessed displacement defined by \eqref{postprocess1}-\eqref{postprocess2}.
Then we have
\begin{equation}\label{eq:postdispalcementupperbound}
\|\sigma-\sigma_h\|_{A} + |u-u_h^{\ast}|_{1,h}\lesssim \eta(\sigma_{h}, {\mathcal{T}_h}) + \|A\sigma_h-\varepsilon_h(u_h^{\ast})\|_{0} + {\rm osc}(f,\mathcal{T}_h).
\end{equation}
\begin{equation}\label{eq:postdispalcementlowerbound}
\eta(\sigma_{h}, \mathcal{T}_h) + \|A\sigma_h-\varepsilon_h(u_h^{\ast})\|_{0} \lesssim \|\sigma-\sigma_h\|_{A} + |u-u_h^{\ast}|_{1,h}.
\end{equation}
\end{theorem}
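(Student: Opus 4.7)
The plan is to treat the two inequalities as separate pieces, reusing the previously established reliability (Theorem 3.1) and efficiency (Theorem 3.2) of $\eta$ for the stress, together with the postprocessing identities, the discrete inf--sup condition \eqref{eq:infsup22}, and the norm equivalence \eqref{eq:temp4post}. The key algebraic observation that drives everything is that the continuous first equation of \eqref{eqn1} gives $\varepsilon(u)=A\sigma$ pointwise, so
\begin{equation*}
\varepsilon_h(u-u_h^\ast)=A(\sigma-\sigma_h)+\bigl(A\sigma_h-\varepsilon_h(u_h^\ast)\bigr),
\end{equation*}
which immediately links the three quantities appearing in the theorem.

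For the upper bound \eqref{eq:postdispalcementupperbound}, the stress contribution $\|\sigma-\sigma_h\|_A$ is already controlled by $\eta+\mathrm{osc}(f,\mathcal{T}_h)$ via Theorem 3.1, so only $|u-u_h^\ast|_{1,h}$ needs work. I would split
\begin{equation*}
u-u_h^\ast=\bigl((u-u_h^\ast)-Q_h(u-u_h^\ast)\bigr)+Q_h(u-u_h^\ast),
\end{equation*}
apply \eqref{eq:temp4post} to the first piece (legitimate since $u-u_h^\ast\in H^1(\mathcal{T}_h;\mathbb{R}^2)$) to replace its $|\cdot|_{1,h}$ seminorm by $\|\varepsilon_h(\cdot)\|_0$, and then use the displayed identity above to bound it by $\|\sigma-\sigma_h\|_A+\|A\sigma_h-\varepsilon_h(u_h^\ast)\|_0$. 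For the second piece, the postprocessing condition \eqref{postprocess1} yields $Q_h u_h^\ast=u_h$, so $Q_h(u-u_h^\ast)=Q_h u-u_h\in V_h$ and the discrete inf--sup \eqref{eq:infsup22} applies. Testing with any $\tau_h\in\Sigma_h$ and using $\mathrm{div}\,\tau_h\in V_h$ together with the first equations of \eqref{eqn1} and \eqref{mfem}, one gets $(\mathrm{div}\,\tau_h,Q_h u-u_h)=(\mathrm{div}\,\tau_h,u-u_h)=-(A(\sigma-\sigma_h),\tau_h)$, which with \eqref{normequivalence} bounds the supremum by $\|\sigma-\sigma_h\|_A$. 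Combining the two pieces and invoking Theorem 3.1 one last time finishes the upper bound.

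For the lower bound \eqref{eq:postdispalcementlowerbound}, Theorem 3.2 already gives $\eta(\sigma_h,\mathcal{T}_h)\lesssim\|\sigma-\sigma_h\|_A$, so only $\|A\sigma_h-\varepsilon_h(u_h^\ast)\|_0$ remains. But this is the easiest piece: the same identity $A\sigma_h-\varepsilon_h(u_h^\ast)=-A(\sigma-\sigma_h)+\varepsilon_h(u-u_h^\ast)$ and the triangle inequality yield $\|A\sigma_h-\varepsilon_h(u_h^\ast)\|_0\lesssim\|\sigma-\sigma_h\|_A+\|\varepsilon_h(u-u_h^\ast)\|_0\le\|\sigma-\sigma_h\|_A+|u-u_h^\ast|_{1,h}$.

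The main obstacle is the upper bound, specifically the need to control the jump contributions in $|u-u_h^\ast|_{1,h}$, because $u_h^\ast$ is only a piecewise polynomial with jumps across edges while $\varepsilon_h(u-u_h^\ast)$ sees none of this. The right tool is the norm equivalence \eqref{eq:temp4post}, which precisely says that the jumps of $v-Q_h v$ are controlled by the symmetric broken gradient; combined with the identity $Q_h u_h^\ast=u_h$ coming from the postprocessing \eqref{postprocess1}, it reduces the jump contribution to a term in $V_h$ where the discrete inf--sup is available. Once this reduction is in place, the rest of the proof is a direct bookkeeping exercise.
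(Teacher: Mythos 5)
Your proposal is correct and follows essentially the same route as the paper: the identity $\varepsilon(u)=A\sigma$, the splitting of $u-u_h^\ast$ into $Q_h(u-u_h^\ast)$ (handled by the discrete inf--sup \eqref{eq:infsup22} together with $Q_hu_h^\ast=u_h$ and the first equations of \eqref{eqn1} and \eqref{mfem}) and its complement (handled by the norm equivalence \eqref{eq:temp4post}), then Theorem 3.1 for the upper bound and the triangle inequality plus Theorem 3.2 for the lower bound. The only cosmetic remark is that when you bound $\|\varepsilon_h(v-Q_hv)\|_0$ you also pick up $\|\varepsilon_h(Q_hv)\|_0\le|Q_hv|_{1,h}$, so the inf--sup bound enters that piece as well — which your final combination step already accounts for.
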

\begin{proof}
Using the discrete inf-sup condition~\eqref{eq:infsup22} with $v_h=Q_h(u-u_h^{\ast})$, \eqref{postprocess1}, the first equations of \eqref{eqn1} and \eqref{mfem}, we get
\begin{align*}
|Q_h(u-u_h^{\ast})|_{1,h}\lesssim & \sup_{0\neq\tau_h \in \Sigma_{h}} \frac{(\div\tau_h, Q_h(u-u_h^{\ast}))}{\|\tau_h\|_{0}}=\sup_{0\neq\tau_h \in \Sigma_{h}} \frac{(\div\tau_h, u-u_h)}{\|\tau_h\|_{0}} \\
=&\sup_{0\neq\tau_h \in \Sigma_{h}} \frac{(A(\sigma-\sigma_h), \tau_h)}{\|\tau_h\|_{0}} \leq \|A(\sigma-\sigma_h)\|_0.
\end{align*}
Choosing $v=u-u_h^{\ast}$ in \eqref{eq:temp4post},
\begin{align*}
| v- Q_h v|_{1,h} \eqsim &\|\varepsilon_h ( v- Q_h v)\|_0 \leq \|\varepsilon_h (u-u_h^{\ast})\|_0 + |Q_h(u-u_h^{\ast})|_{1,h} \\
=&  \|A\sigma - \varepsilon_h (u_h^{\ast})\|_0 + |Q_h(u-u_h^{\ast})|_{1,h} \\
\lesssim &  \|A\sigma_h - \varepsilon_h (u_h^{\ast})\|_0 + \|A(\sigma-\sigma_h)\|_0.
\end{align*}
Then it follows from the last two inequalities that
\[
|u-u_h^{\ast}|_{1,h}\lesssim \|A\sigma_h - \varepsilon_h (u_h^{\ast})\|_0 + \|A(\sigma-\sigma_h)\|_0,
\]
which combined with \eqref{eq:posterioriestimateupperbound} implies \eqref{eq:postdispalcementupperbound}.

Next we prove the efficiency \eqref{eq:postdispalcementlowerbound}. By the triangle inequality,
\begin{align*}
\|A\sigma_h - \varepsilon_h (u_h^{\ast})\|_0\leq & \|A(\sigma-\sigma_h)\|_0+ \|A\sigma - \varepsilon_h (u_h^{\ast})\|_0 \\
=& \|A(\sigma-\sigma_h)\|_0 + \|\varepsilon_h (u-u_h^{\ast})\|_0 \\
\lesssim & \|\sigma-\sigma_h\|_A + |u-u_h^{\ast}|_{1,h}.
\end{align*}
Therefore we can end the proof by using \eqref{eq:posterioriestimatelowerbound}.
\end{proof}


\section{Numerical experiments}

We will testify the {\it a posteriori} error estimator by some numerical examples in this section.

In the first example, let $\Omega=(0, 1)^2$, $k=3$, $\mu=1$,
the right-hand side
\[
f(x,y)=\pi^3\left(
\begin{array}{c}
-\sin(2\pi y)(2\cos(2\pi x)-1) \\
\sin(2\pi x)(2\cos(2\pi y)-1)
\end{array}
\right),
\]
and the exact solution \cite[Section~5.2]{CarstensenGedicke2016}
\[
u(x,y)=\frac{\pi}{2}\left(
\begin{array}{c}
\sin^2(\pi x)\sin(2\pi y) \\
-\sin^2(\pi y)\sin(2\pi x)
\end{array}
\right).
\]
We subdivide $\Omega$ by a uniform triangular mesh.
The {\it a priori} and {\it a posteriori} error estimates for $\lambda=10$ and $\lambda=10000$ are listed in Tables~\ref{table:lambda10}-\ref{table:lambda10000},
from which we can see that the convergence rates of $\|\sigma-\sigma_h\|_A$, $\|\nabla_h(u-u_h^{\ast})\|_{0}$, $\eta(\sigma_h, \mathcal{T}_h)$ and $\|A\sigma_h - \varepsilon_h(u_h^{\ast})\|_0$ are all $O(h^{4})$.
Hence the {\it a posteriori} error estimators $\eta(\sigma_h, \mathcal{T}_h)$ and $\eta(\sigma_h, \mathcal{T}_h)+\|A\sigma_h - \varepsilon_h(u_h^{\ast})\|_0$ are both uniformly reliable and efficient
with respect to the mesh size $h$ and $\lambda$ for smooth solutions.
\begin{table}[htbp]
  \centering
  \caption{Numerical errors for the first example when $\lambda=10$}\label{table:lambda10}
\resizebox{\textwidth}{!}{ %
  \begin{tabular}{|c|c|c|c|c|c|c|c|c|}
     \hline
     $h$ & $\|\sigma-\sigma_h\|_A$ & order & $\|\nabla_h(u-u_h^{\ast})\|_{0}$ & order & $\eta(\sigma_h, \mathcal{T}_h)$ & order & $\|A\sigma_h - \varepsilon_h(u_h^{\ast})\|_0$ & order \\
\hline $2^{-1}$ & 6.6998E-01 & $-$ & 7.9544E-01 & $-$ & 1.6615E+01 & $-$ & 4.0073E-02 & $-$ \\
\hline $2^{-2}$ & 5.2451E-02 & 3.68 & 6.0585E-02 & 3.71 & 1.3585E+00 & 3.61 & 9.3899E-03 & 2.09 \\
\hline $2^{-3}$ & 3.6139E-03 & 3.86 & 4.5839E-03 & 3.72 & 1.0918E-01 & 3.64 & 7.1387E-04 & 3.72 \\
\hline $2^{-4}$ & 2.2714E-04 & 3.99 & 3.0676E-04 & 3.90 & 7.4510E-03 & 3.87 & 4.5925E-05 & 3.96 \\
\hline $2^{-5}$ & 1.4193E-05 & 4.00 & 1.9600E-05 & 3.97 & 4.7919E-04 & 3.96 & 2.8824E-06 & 3.99 \\
\hline $2^{-6}$ & 8.8742E-07 & 4.00 & 1.2347E-06 & 3.99 & 3.0263E-05 & 3.99 & 1.8040E-07 & 4.00 \\
\hline $2^{-7}$ & 5.5567E-08 & 4.00 & 7.7435E-08 & 3.99 & 1.8992E-06 & 3.99 & 1.1306E-08 & 4.00 \\
 \hline
   \end{tabular}
 }%
\end{table}
\begin{table}[htbp]
  \centering
  \caption{Numerical errors for the first example when $\lambda=10000$}\label{table:lambda10000}
\resizebox{\textwidth}{!}{ %
  \begin{tabular}{|c|c|c|c|c|c|c|c|c|}
     \hline
     $h$ & $\|\sigma-\sigma_h\|_A$ & order & $\|\nabla_h(u-u_h^{\ast})\|_{0}$ & order & $\eta(\sigma_h, \mathcal{T}_h)$ & order & $\|A\sigma_h - \varepsilon_h(u_h^{\ast})\|_0$ & order \\
\hline $2^{-1}$ & 6.6096E-01 & $-$ & 7.7905E-01 & $-$ & 1.6050E+01 & $-$ & 4.3292E-02 & $-$ \\
\hline $2^{-2}$ & 5.1630E-02 & 3.68 & 5.8762E-02 & 3.73 & 1.3066E+00 & 3.62 & 9.0182E-03 & 2.26 \\
\hline $2^{-3}$ & 3.5430E-03 & 3.87 & 4.3977E-03 & 3.74 & 1.0508E-01 & 3.64 & 6.8780E-04 & 3.71 \\
\hline $2^{-4}$ & 2.2220E-04 & 4.00 & 2.9277E-04 & 3.91 & 7.1542E-03 & 3.88 & 4.4330E-05 & 3.96 \\
\hline $2^{-5}$ & 1.3873E-05 & 4.00 & 1.8668E-05 & 3.97 & 4.5947E-04 & 3.96 & 2.7853E-06 & 3.99 \\
\hline $2^{-6}$ & 8.6708E-07 & 4.00 & 1.1751E-06 & 3.99 & 2.8998E-05 & 3.99 & 1.7442E-07 & 4.00 \\
\hline $2^{-7}$ & 5.4210E-08 & 4.00 & 7.3695E-08 & 4.00 & 1.8195E-06 & 3.99 & 1.0922E-08 & 4.00 \\
 \hline
   \end{tabular}
 }%
\end{table}

Next we use the {\it a posteriori} error estimator $\eta(\sigma_h, \mathcal{T}_h)$ to design an adaptive mixed finite element method, i.e. Algorithm~\ref{alg:amfem}.
The approximate block factorization preconditioner with GMRES \cite{ChenHuHuang2016} is adopted in the SOLVE part of Algorithm~\ref{alg:amfem}, which is verified to be highly efficient and robust even on adaptive meshes by our numerical examples.
\begin{algorithm}
\label{alg:amfem}
\caption{Adaptive algorithm for the mixed finite element method~\eqref{mfem}.}
Given a parameter $0<\vartheta<1$ and an initial mesh
$\mathcal{T}_{0}$. Set $m:=0$.
\begin{enumerate}[1.]
    \item \textbf{SOLVE}: Solve the mixed finite element method \eqref{mfem} on $\mathcal{T}_{m}$ for
    the discrete solution $(\sigma_{m},u_{m})\in\Sigma_{m}\times
V_{m}$.
    \item \textbf{ESTIMATE}: Compute the error indicator $\eta^2(\sigma_{m}, {\mathcal{T}_m})$ piecewise.
    \item \textbf{MARK}: Mark a set $\mathcal{S}_{m}\subset\mathcal{T}_{m}$ with minimal cardinality by  D\"{o}rfler marking such that
    \[
        \eta^2(\sigma_{m}, {\mathcal{S}_m})\geq\vartheta \eta^2(\sigma_{m}, {\mathcal{T}_m}).
            \]
    \item \textbf{REFINE}: Refine each triangle $K$ with at least one edge in $\mathcal{S}_{m}$ by the newest vertex bisection to get
    $\mathcal{T}_{m+1}$.
    \item Set $m:=m+1$ and go to Step 1.
\end{enumerate}
\end{algorithm}

Now we construct a problem with singularity in the solution to test Algorithm~\ref{alg:amfem}.
Set L-shaped domain $\Omega=(-1,1)\times(-1,1)\backslash
[0,1)\times(-1,0]$.
Let
\[
\Phi_1(\theta)=\left(
\begin{array}{c}
\big((z+2)(\lambda+\mu)+4\mu\big)\sin(z\theta) - z(\lambda+\mu)\sin((z-2)\theta) \\
    z(\lambda+\mu)\big(\cos(z\theta)-\cos((z-2)\theta)\big)
\end{array}
\right),
\]
\[
\Phi_2(\theta)=\left(
\begin{array}{c}
z(\lambda+\mu)\big(\cos((z-2)\theta)-\cos(z\theta)\big) \\
    -\big((2-z)(\lambda+\mu)+4\mu\big)\sin(z\theta) - z(\lambda+\mu)\sin((z-2)\theta)
\end{array}
\right),
\]
\begin{align*}
\Phi(\theta) = &\Big(z(\lambda+\mu)\sin((z-2)\omega) + \big((2-z)(\lambda+\mu)+4\mu\big)\sin(z\omega)\Big)\Phi_1(\theta) \\
&- z(\lambda+\mu)\big(\cos((z-2)\omega)-\cos(z\omega)\big)\Phi_2(\theta),
\end{align*}
where $z\in(0, 1)$ is a real root of $(\lambda+3\mu)^2\sin^2(z\omega)=(\lambda+\mu)^2z^2\sin^2\omega$ with $\omega=3\pi/2$.
The exact singular solution in polar coordinates is taken as \cite[Section~4.6]{Grisvard1992}
\[
u(r,\theta)=\frac{1}{(\lambda+\mu)^2}(r^2\cos^2\theta-1)(r^2\sin^2\theta-1)r^{z}\Phi(\theta).
\]
It can be computed that $z=0.561586549334359$ for $\lambda=10$, and $z=0.544505718203590$ for $\lambda=10000$.
We also take $k=3$ and $\mu=1$.

Some meshes generated by Algorithm~\ref{alg:amfem} for different bulk parameter $\vartheta$ and Lam\'e constant $\lambda$
are shown in Figure~\ref{fig:ldomainIter}, where $\#\textrm{dofs}$ is the number of degrees of freedom.
The adaptive Algorithm~\ref{alg:amfem} captures the singularity of the exact solution on the corner $(0, 0)$ very well.
The histories of the adaptive Algorithm~\ref{alg:amfem} for $\vartheta=0.1, 0.2$ and $\lambda=10, 10000$ are presented
in Figures~\ref{fig:errorex2lambda10}-\ref{fig:errorex2lambda10000}.
We can see from Figures~\ref{fig:errorex2lambda10}-\ref{fig:errorex2lambda10000} that
the convergence rates of errors $\|\sigma-\sigma_h\|_A$ and $\eta(\sigma_h, \mathcal{T}_h)$ are both $O((\#\textrm{dofs})^{-2})$
no matter $\lambda=10$ or $\lambda=10000$, which demonstrates the theoretical results. For uniform grid,  $\#(\textrm{dofs})^{-2}\cong h^4$, this means that the errors $\|\sigma-\sigma_h\|_A$ and $\eta(\sigma_h, \mathcal{T}_h)$ converge with an optimal rate.
\begin{figure}[htbp]
\subfigure[Initial mesh]{
\begin{minipage}[t]{0.5\linewidth}
\centering
\includegraphics[scale=0.58]{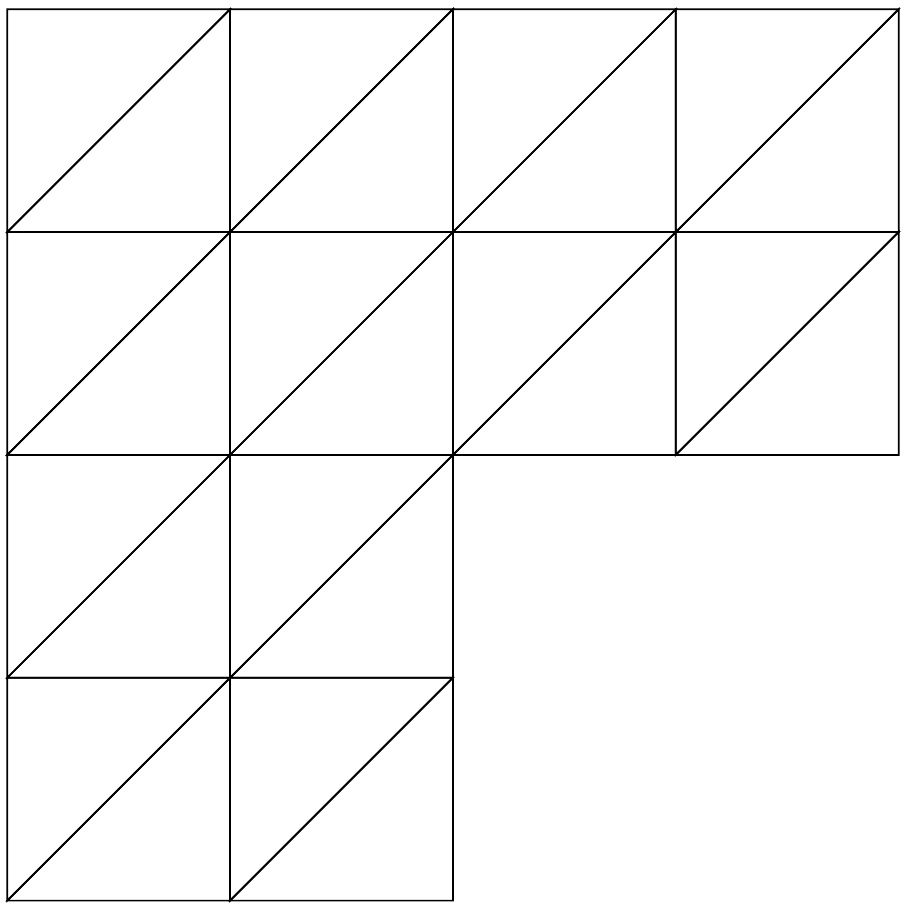}
\end{minipage}}
\subfigure[$\#\textrm{dofs}=198098, \theta=0.1, \lambda=10$]
{\begin{minipage}[t]{0.5\linewidth}
\centering
\includegraphics[scale=0.58]{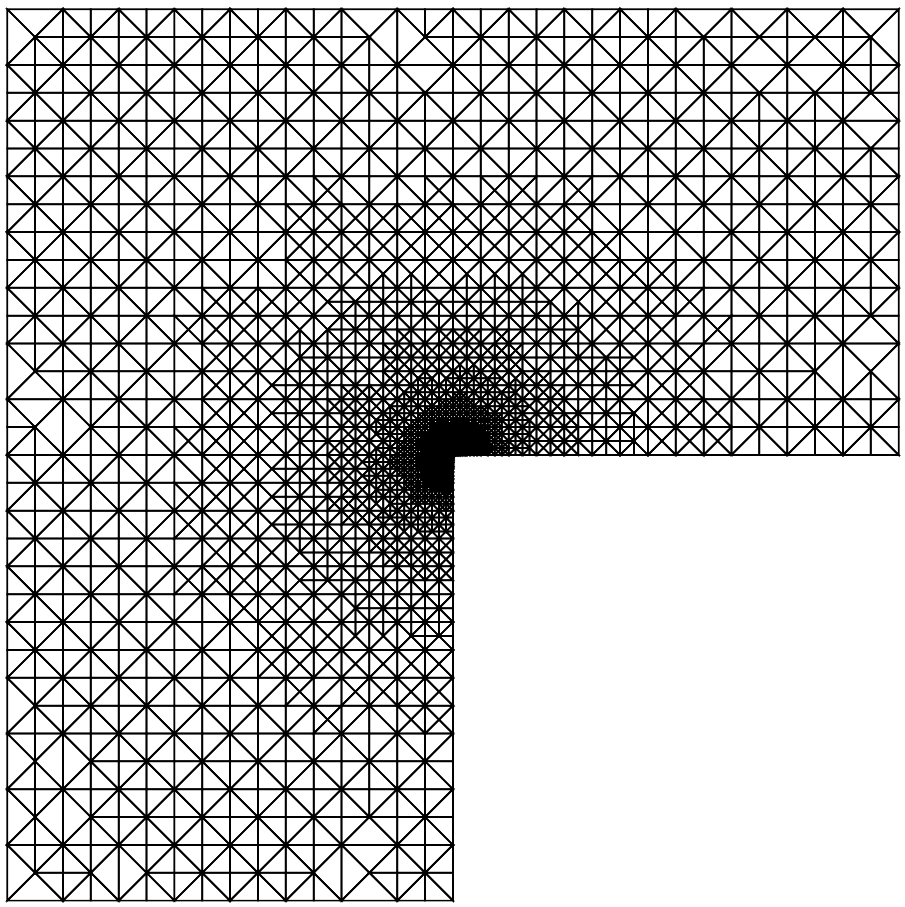}
\end{minipage}}
\vskip 0.1cm
\subfigure[$\#\textrm{dofs}=129624, \theta=0.2, \lambda=10$]
{\begin{minipage}[t]{0.5\linewidth}
\centering
\includegraphics[scale=0.58]{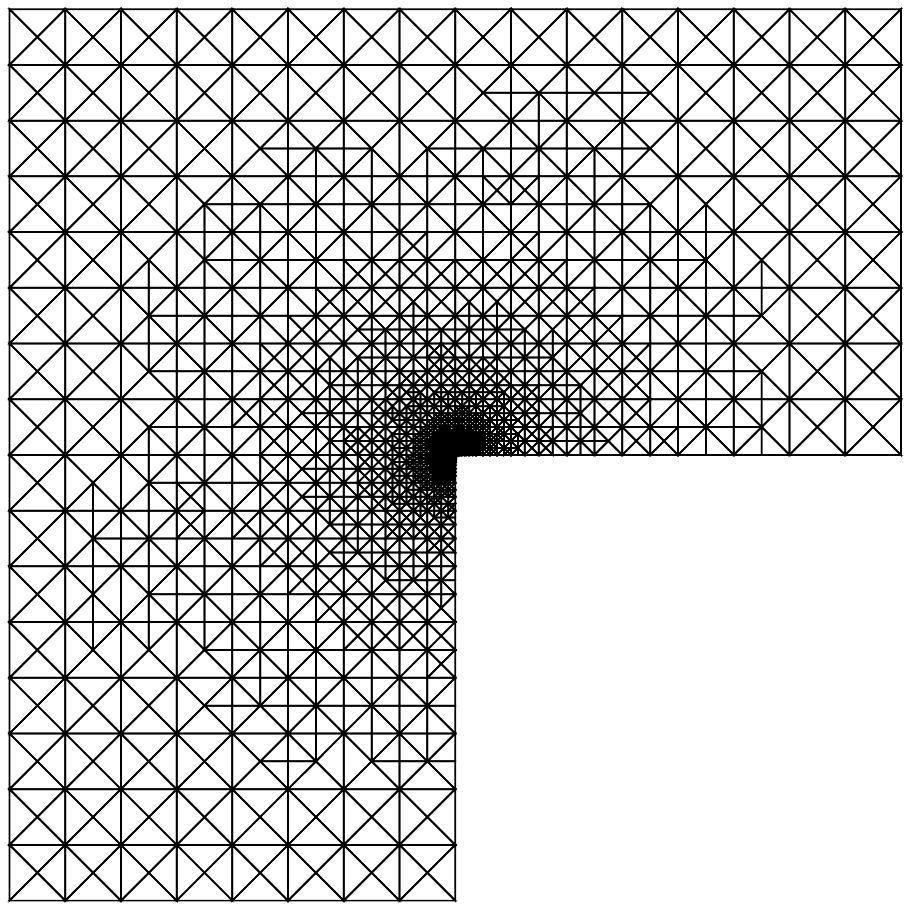}
\end{minipage}}
\subfigure[$\#\textrm{dofs}=138323, \theta=0.2, \lambda=10000$]
{\begin{minipage}[t]{0.5\linewidth}
\centering
\includegraphics[scale=0.58]{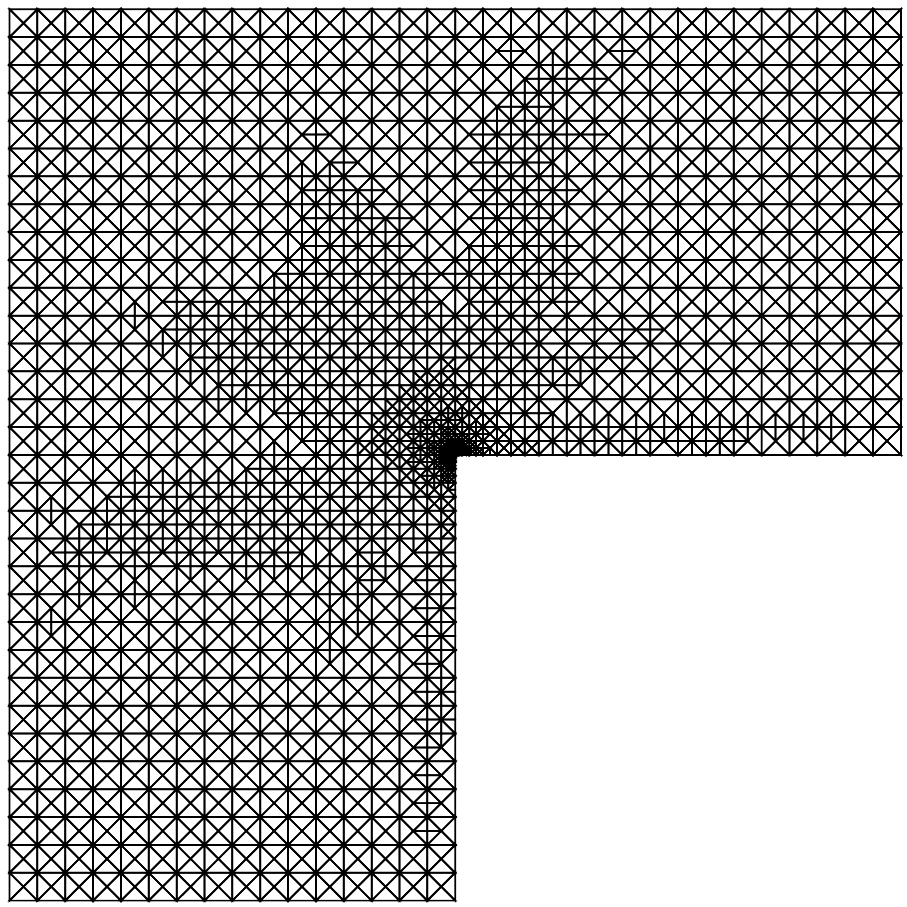}
\end{minipage}}
\vskip -0.2cm
\caption{Meshes generated in Algorithm~\ref{alg:amfem} with different $\theta$ and $\lambda$ for Example 2}
\label{fig:ldomainIter}
\end{figure}
\begin{figure}[htbp]
  \centering
    \includegraphics[scale=0.8]{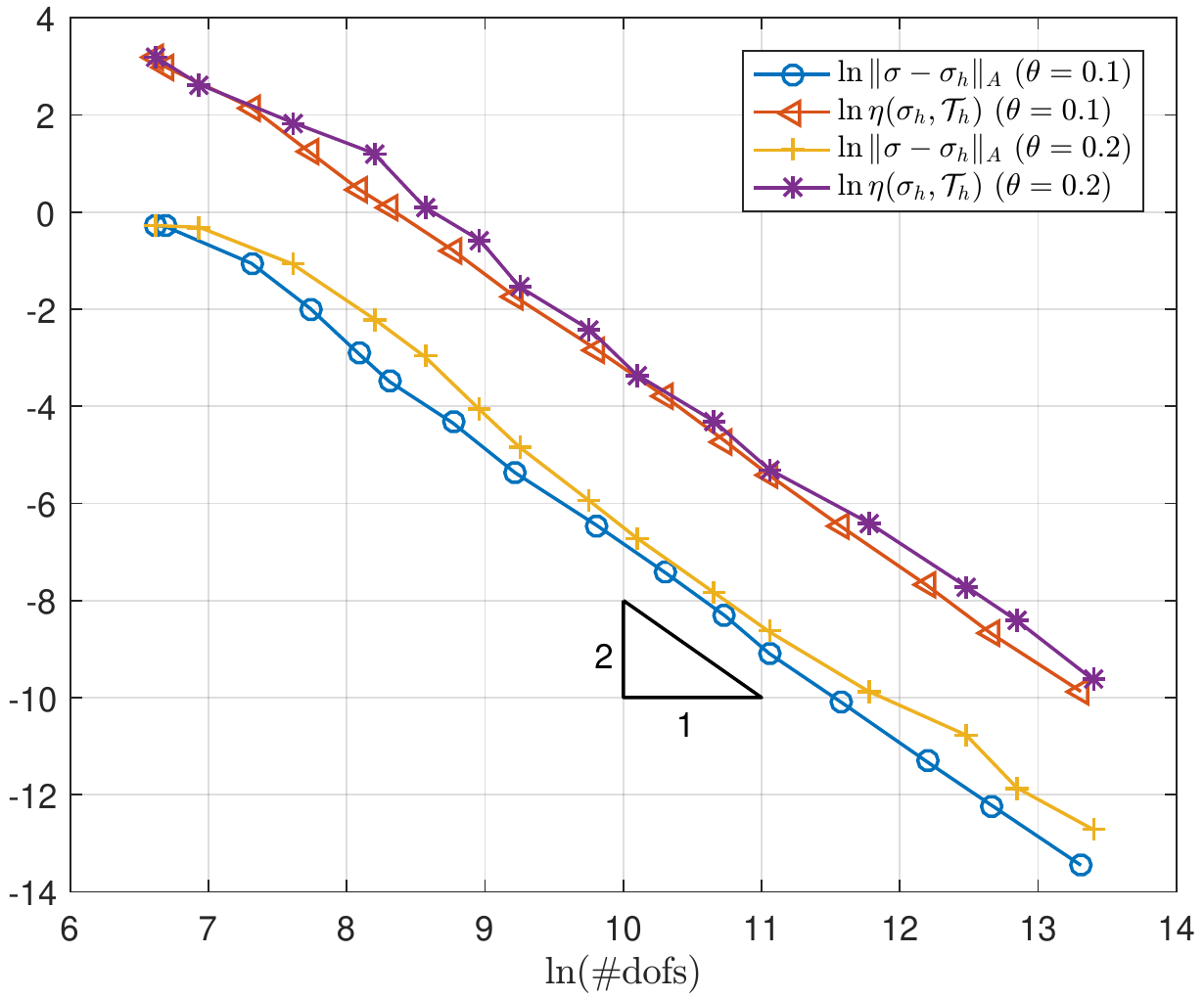}
  \caption{Errors $\|\sigma-\sigma_h\|_A$ and $\eta(\sigma_h, \mathcal{T}_h)$ vs $\#$dofs in
$\ln$-$\ln$ scale for Example 2 with $\lambda=10$.} \label{fig:errorex2lambda10}
\end{figure}
\begin{figure}[htbp]
  \centering
    \includegraphics[scale=0.8]{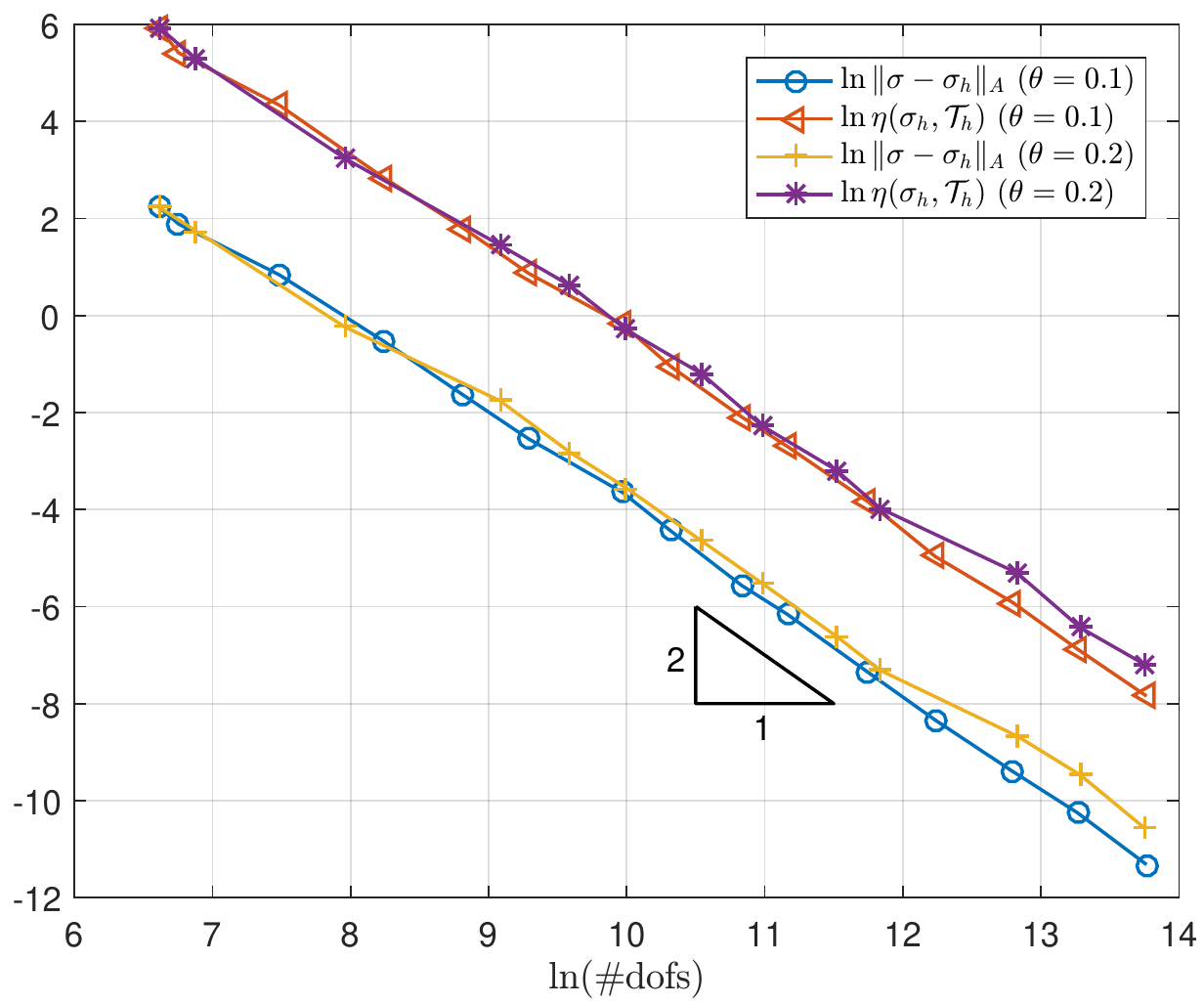}
  \caption{Errors $\|\sigma-\sigma_h\|_A$ and $\eta(\sigma_h, \mathcal{T}_h)$ vs $\#$dofs in
$\ln$-$\ln$ scale for Example 2 with $\lambda=10000$.} \label{fig:errorex2lambda10000}
\end{figure}

The third example considers the L-shape benchmark problem with general boundary conditions testified in \cite[section~5.3]{CarstensenGedicke2016} on the rotated L-shaped domain with the initial mesh as depicted in Figure~\ref{fig:initialdomainex3}. We impose the Neumann boundary
condition on the boundary $x^2=y^2$ and the Dirichlet boundary condition on the rest boundary of $\Omega$. The exact solution in the polar coordinates is given as follows
\[
\begin{pmatrix}u_r(r,\theta) \\ u_{\theta}(r,\theta)\end{pmatrix}=\frac{r^{\alpha}}{2\mu}\begin{pmatrix}-(\alpha+1)\cos((\alpha+1)\theta) + (C_2-\alpha-1)C_1\cos((\alpha-1)\theta) \\ (\alpha+1)\sin((\alpha+1)\theta) + (C_2+\alpha-1)C_1\sin((\alpha-1)\theta) \end{pmatrix}.
\]
The constants are $C_1:=-\cos((\alpha+1)\omega)/\cos((\alpha-1)\omega)$ and $C_2:=-2(\lambda+2\mu)/(\lambda+\mu)$, where $\alpha=0.544483736782$ is the positive solution of $\alpha \sin(2\omega)+\sin(2\omega\alpha)=0$ for $\omega=3\pi/4$.
The Lam\'{e} parameters
\[
\lambda=\frac{E\nu}{(1+\nu)(1-2\nu)},\quad \mu=\frac{E}{2(1+\nu)}
\]
with the elasticity modulus $E=10^5$ and the Poisson ratio $\nu=0.4999$.
The volume force $f(x,y)$ and the Neumann boundary data vanish, and the Dirichlet boundary condition is taken from the exact solution.
The histories of Algorithm~\ref{alg:amfem} for $k=3, 4, 5$ and $\vartheta=0.1$ are presented
in Figures~\ref{fig:errorex3priori}-\ref{fig:errorex3posteriori}, which indicate that the convergence rates of errors $\|\sigma-\sigma_h\|_A$ and $\eta(\sigma_h, \mathcal{T}_h)$ are both $O((\#\textrm{dofs})^{-(k+1)/2})$.
\begin{figure}[htbp]
  \centering
    \includegraphics[scale=0.8]{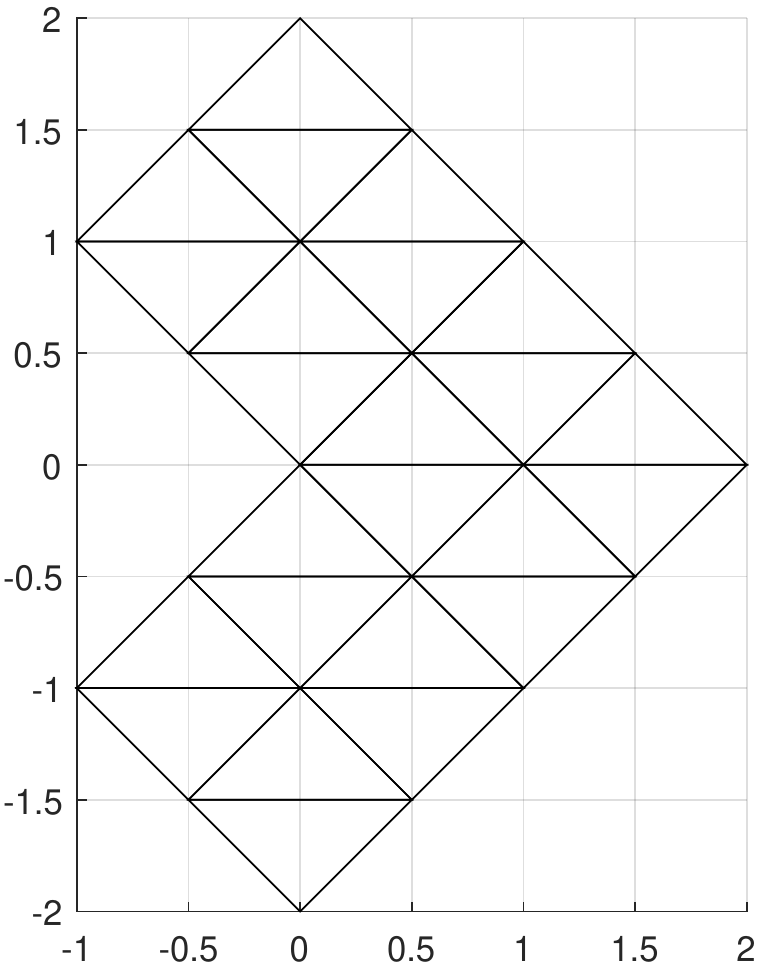}
  \caption{The rotated L-shaped domain with the initial mesh.} \label{fig:initialdomainex3}
\end{figure}
\begin{figure}[htbp]
  \centering
    \includegraphics[scale=0.8]{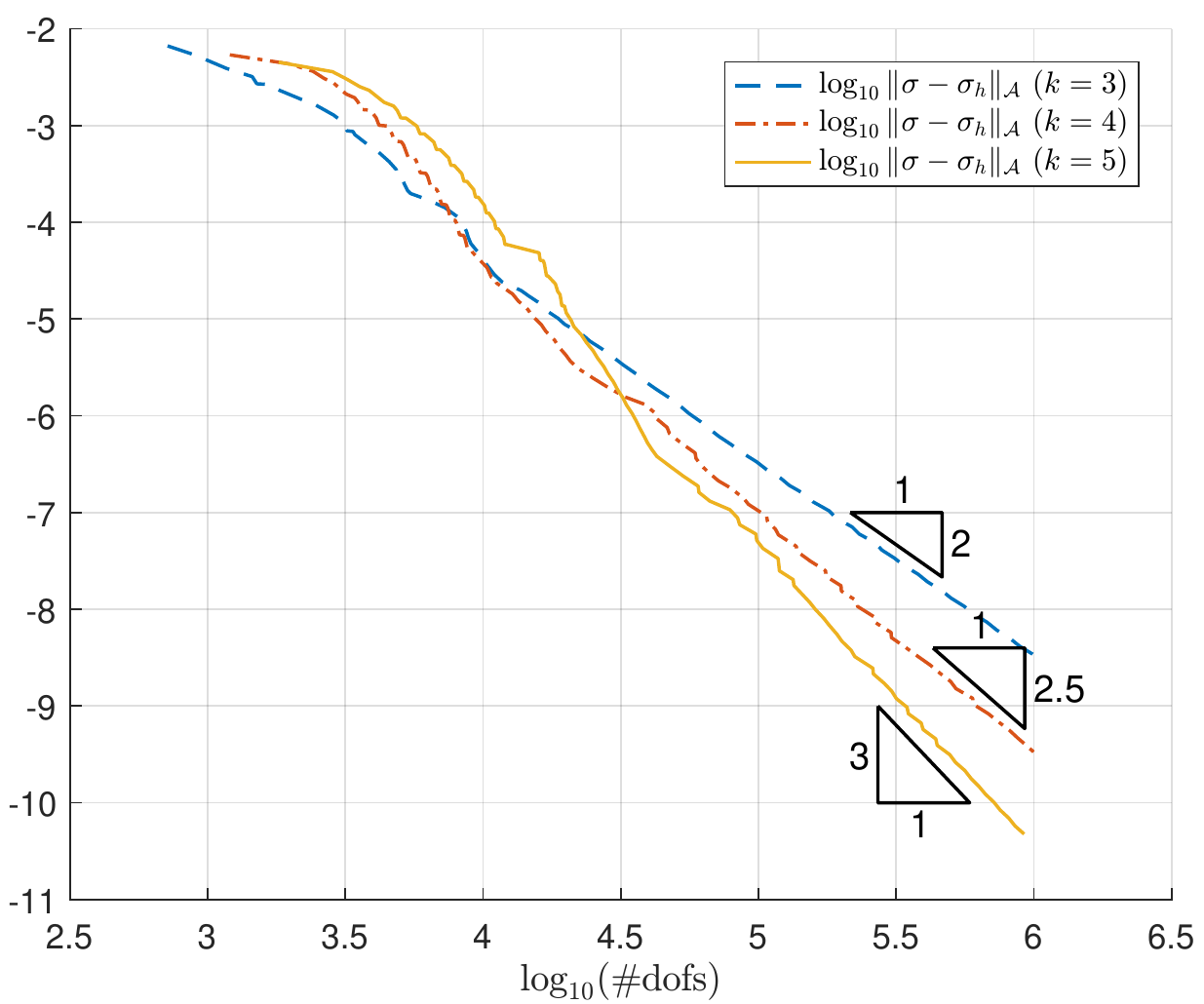}
  \caption{Errors $\|\sigma-\sigma_h\|_A$ vs $\#$dofs in
$\log_{10}$-$\log_{10}$ scale for Example 3 with $\vartheta=0.1$.} \label{fig:errorex3priori}
\end{figure}
\begin{figure}[htbp]
  \centering
    \includegraphics[scale=0.8]{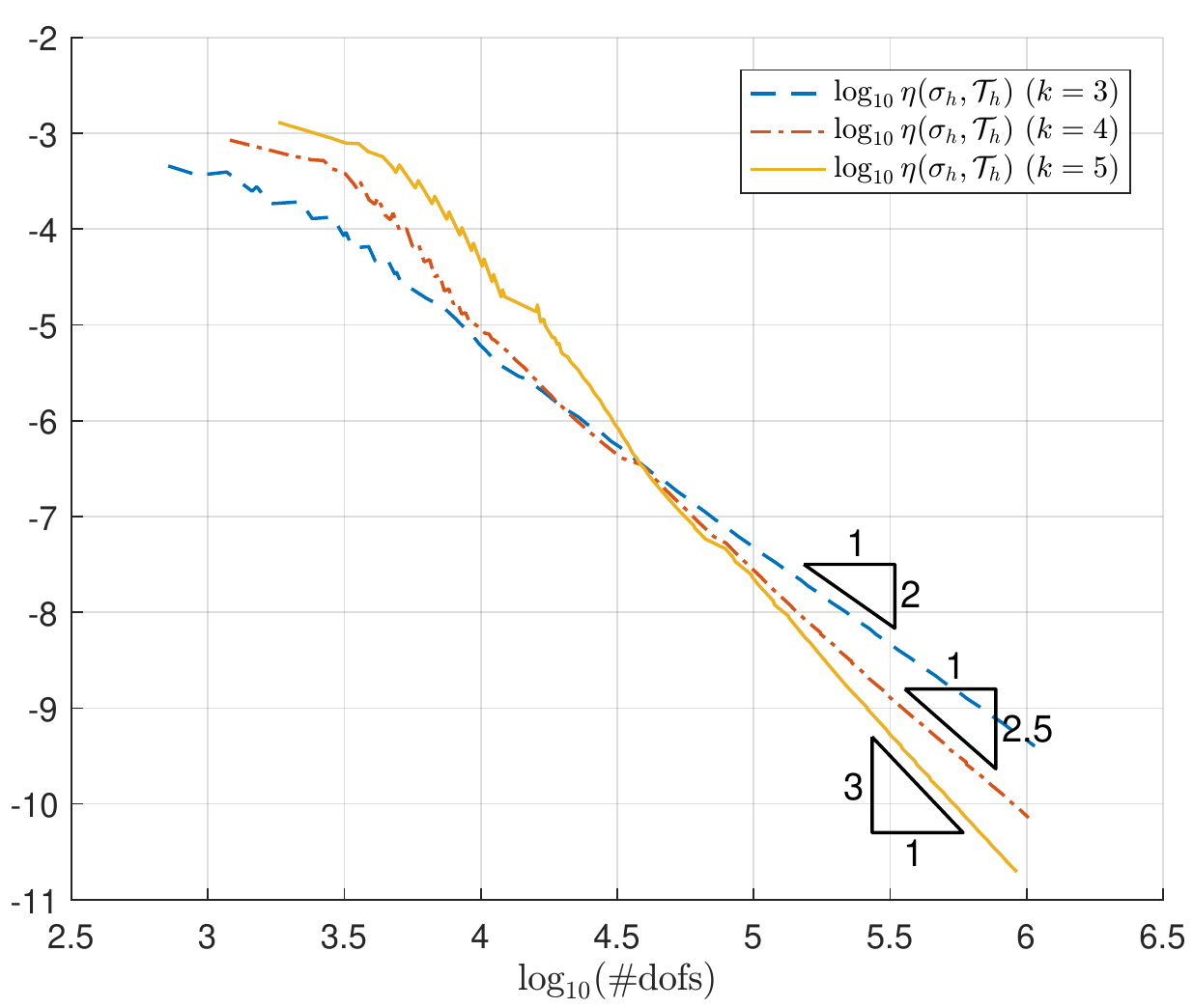}
  \caption{Errors $\eta(\sigma_h, \mathcal{T}_h)$ vs $\#$dofs in
$\log_{10}$-$\log_{10}$ scale for Example 3 with $\vartheta=0.1$.} \label{fig:errorex3posteriori}
\end{figure}


\end{document}